\documentclass[a4paper,UKenglish,numberwithinsect]{lipics_md}
\usepackage{graphicx}
\usepackage{color}
\usepackage{amsmath}
\usepackage{amssymb}
\usepackage{xspace}
\usepackage{epsfig}
\usepackage[dvipsnames]{xcolor}
\usepackage{soul}
\usepackage{xfrac}

\usepackage{enumerate}

\newcommand {\mm}[1] {\ifmmode{#1}\else{\mbox{\(#1\)}}\fi}

\newcommand {\scalprod}[2] {{\langle #1 , #2 \rangle}}
\newcommand{\denselist}{\itemsep 0pt\parsep=1pt\partopsep 0pt}

\newcommand{\ignore}[1]{}

\newsavebox{\smallProofsym}                 
\savebox{\smallProofsym}                               %
{
\begin{picture}(6,6)
\put(0,0){\framebox(6,6){}}
\put(0,2){\framebox(4,4){}}
\end{picture} 
}  

\makeatletter
\long\def\@makecaption#1#2{%
  \vskip\abovecaptionskip
  \sbox\@tempboxa{\small #1: #2}%
  \ifdim \wd\@tempboxa >\hsize
    \small #1: #2\par
  \else
    \global \@minipagefalse
    \hb@xt@\hsize{\hfil\box\@tempboxa\hfil}%
  \fi
  \vskip\belowcaptionskip}
\makeatother

\theoremstyle{plain}

\newtheorem*{supermaintheorem*}{Main Theorem}

\newtheorem*{supermaincorollary*}{Main Corollary}

\newcommand{\Rspace}        {\mm{{\mathbb R}}}

\newcommand{\Zspace}        {\mm{{\mathbb Z}}}

\newcommand{\Voronoi}[2]    {\mm{{\rm Vor}_{#1}{({#2})}}}
\newcommand{\wVoronoi}[3]   {\mm{{\rm Vor}_{#1}{({#2},{#3})}}}
\newcommand{\Delaunay}[2]   {\mm{{\rm Del}_{#1}{({#2})}}}
\newcommand{\wDelaunay}[3]  {\mm{{\rm Del}_{#1}{({#2},{#3})}}}
\newcommand{\Polyhedron}[2] {\mm{{\mathcal P}_{#1}{({#2})}}}
\newcommand{\wPolyhedron}[3]{\mm{{\mathcal P}_{#1}{({#2},{#3})}}}
\newcommand{\Brillouin}[2]  {\mm{{\rm Bri}_{#1}{({#2})}}}
\newcommand{\Iglesias}[2]   {\mm{{\rm Igl}_{#1}{({#2})}}}

\newcommand{\domain}[2]     {\mm{{\rm dom}_{#1}{({#2})}}}
\newcommand{\Angle}[1]      {\mm{{\angle}{#1}}}
\newcommand{\intensity}     {\mm{\rho}}
\newcommand{\In}[1]         {\mm{{\rm In}{({#1})}}}
\newcommand{\On}[1]         {\mm{{\rm On}{({#1})}}}

\newcommand{\card}[1]       {\mm{{\#}{#1}}}

\newcommand{\norm}[1]       {\mm{\|{#1}\|}}
\newcommand{\Edist}[2]      {\mm{\|{#1}-{#2}\|}}

\newcommand{\ee}            {\mm{\varepsilon}}

\newcommand{\aaa}           {\mm{\bf a}}
\newcommand{\aaaaa}         {\mm{\bar{\bf a}}}
\newcommand{\point}[1]      {\mm{{\rm pt}{({#1})}}}
\newcommand{\height}[1]     {\mm{{\rm ht}{({#1})}}}
\newcommand{\pointt}[2]     {\mm{{\rm pt}{({#1},{#2})}}}
\newcommand{\heightt}[2]    {\mm{{\rm ht}{({#1},{#2})}}}

\newcommand{\Skip}[1]       {}

\definecolor{blue-green}{rgb}{0.0, 0.87, 0.87}

\bibliographystyle{plainurl}

\title{On Angles in Higher Order Brillouin Tessellations and Related Tilings in the Plane\footnote{
  Work by all authors but the second is supported by the European Research Council (ERC), grant no.\ 788183, by the Wittgenstein Prize, Austrian Science Fund (FWF), grant no.\ Z 342-N31, and by the DFG Collaborative Research Center TRR 109, Austrian Science Fund (FWF), grant no.\ I 02979-N35.
  Work by the second author is partially supported by the Alexander von Humboldt Foundation.
}
}
\titlerunning{On Angles in Higher Order Brillouin Tessellations and Related Tilings in the Plane}

\author[1]{Herbert Edelsbrunner}
\affil[1]{IST Austria (Institute of Science and Technology Austria), Kloster\-neu\-burg, Austria, \texttt{herbert.edelsbrunner@ist.ac.at}, \texttt{teresa.heiss@ist.ac.at}, \texttt{morteza.saghafian@ist.ac.at}}

\author[2]{Alexey Garber}
\affil[2]{School of Mathematical and Statistical Sciences, University of Texas Rio Grande Valley, Brownsville, Texas, USA, \texttt{alexey.garber@utrgv.edu}}

\author[3]{Mohadese Ghafari}
\affil[3]{Department of Mathematical Sciences, Sharif University of Technology, Tehran, Iran, \texttt{mohadese.ghafari@ist.ac.at}}

\author[1]{Teresa Heiss}

\author[1]{Morteza Saghafian}

\authorrunning{Edelsbrunner, Garber, Ghafari, Heiss, Saghafian}



\keywords{Delaunay and Iglesias mosaics, Voronoi and Brillouin tessellations, higher order, orthogonal duals, Poisson point processes, angles, computational experiments.}

\begin{document}
\maketitle

\begin{abstract}
  For a locally finite set in $\Rspace^2$, the order-$k$ Brillouin tessellations form an infinite sequence of convex face-to-face tilings of the plane.
  If the set is coarsely dense and generic, then the corresponding infinite sequences of minimum and maximum angles are both monotonic in $k$.
  As an example, a stationary Poisson point process in $\Rspace^2$ is locally finite, coarsely dense, and generic with probability one.
  For such a set, the distribution of angles in the Voronoi tessellations, Delaunay mosaics, and Brillouin tessellations are independent of the order and can be derived from the formula for angles in order-$1$ Delaunay mosaics given by Miles in 1970.
\end{abstract}

\section{Introduction}
\label{sec:1}

The starting point for the work reported in this paper is a computational experiment conducted as part of a general geometric study of Brillouin zones \cite{EGGHSW21}.
Computing the minimum angles in the $k$-th Brillouin zones of a point in a $2$-dimensional lattice, we noticed that these angles vary monotonically with $k$.
The goal of this paper is to shed additional light on this phenomenon: to generalize, to prove, and to relate to prior knowledge.

\medskip
The most famous result on angles in Delaunay mosaics is Sibson's Maxmin Angle Theorem \cite{Sib78}, which asserts that among all triangulations of a (generic) finite set in $\Rspace^2$, the Delaunay mosaic maximizes the vector of angles sorted in the increasing order lexicographically.
The theorem compares the Delaunay mosaic with other ways to connect the points to form a triangulation.
In contrast, our main result compares the Delaunay mosaic of a set with the higher-order Delaunay mosaics of the same set.
To be specific, we write $\Delaunay{k}{A}$ for the order-$k$ Delaunay mosaic of a set $A \subseteq \Rspace^2$, noting that it is dual to the perhaps better known order-$k$ Voronoi tessellation of $A$.
Writing $\alpha (\Delaunay{k}{A})$ for the infimum angle in the order-$k$ Delaunay mosaic, we prove that $\alpha (\Delaunay{k}{A}) \geq \alpha (\Delaunay{k+1}{A})$ for $k \geq 1$.
This inequality holds when $A$ is locally finite, coarsely dense, and generic; see Section~\ref{sec:2} for the definitions.
Importantly, the inequality is not necessarily true if $A$ is finite.

The inequality for the infimum angles generalizes to order-$k$ Brillouin tessellations (introduced as \emph{degree-$k$ Voronoi diagrams} in \cite{EdSe86})  and to order-$k$ Iglesias mosaics (duals of the order-$k$ Brillouin tessellations), but not to order-$k$ Voronoi tessellations.
Most interesting is however that it holds for the order-$k$ Brillouin tessellations even for points in non-generic position, while this is not true for the order-$k$ Delaunay and Iglesias mosaics.

\medskip
Examples of locally finite and coarsely dense sets are lattices as well as Delone sets, which have packing radius bounded away from zero and covering radius bounded away from infinity.
Another example is a stationary Poisson point process, which is also generic with probability one.
The angle distribution of the (order-$1$) Delaunay mosaic of such a process in $\Rspace^2$ has been determined by Miles \cite{Mil70}.
By the independence of the shape and size of triangles in such a process \cite{ENR17}, the angles of order-$k$ Delaunay mosaics follow the same distribution.
Since order-$k$ Voronoi tessellations are orthogonally dual to these mosaics, their angles follow the symmetric distribution.
The sum (or rather average) of the two distributions is concave and governs the angles of the order-$k$ Brillouin tessellations and Iglesias mosaics.

\medskip \noindent \textbf{Outline.}
Section~\ref{sec:2} introduces background on Voronoi tessellations and Delaunay mosaics, which includes weighted and higher-order versions as well as the related Brillouin tessellations and Iglesias mosaics.
Section~\ref{sec:3} studies the angles of these tessellations and mosaics and proves their monotonicity for locally finite and coarsely dense sets in $\Rspace^2$.
Section~\ref{sec:4} considers the special case of stationary Poisson point processes and characterizes the angle distributions of the tessellations and mosaics.
Section~\ref{sec:5} concludes the paper with a short discussion.

\section{Mosaics and Tessellations}
\label{sec:2}

Given a locally finite set in Euclidean space, its Voronoi tessellation and Delaunay mosaic are dual tilings.
Moving the focus from individual points to subsets of fixed size, $k$, we get the order-$k$ Voronoi tessellation and order-$k$ Delaunay mosaic, which are again dual tilings of the space.
Using a duality introduced in \cite{Aur90}, these generalized tilings are, at the same time, Voronoi tessellations and Delaunay mosaics of weighted point sets.
In this section, we discuss these concepts in the planar case.

\subsection{Orthogonal Dual}
\label{sec:2.1}

We consider \emph{convex face-to-face tilings} of the plane, by which we mean countable and locally finite collections of closed convex polygons (\emph{tiles}) that cover $\Rspace^2$ in such a way that any two tiles are either disjoint or overlap in a common edge or vertex. 
More formally, a convex face-to-face tiling is a complex consisting of convex polygons, edges, and vertices, whose underlying space is $\Rspace^2$.
\begin{definition}[Orthogonal Dual]
  \label{def:orthogonal_dual}
  Let $V$ be a convex face-to-face tiling of $\Rspace^2$.
  Another such tiling, $D$, is an \emph{orthogonal dual} of $V$ if there is an incidence-preserving and dimension-reversing bijection $\beta \colon V \to D$ such that
  \medskip \begin{enumerate}[(i)]
    \item $e$ is orthogonal to $\beta (e)$ for every edge $e \in V$,
    \item if $e$ is shared by tiles $t_1$ on the left and $t_2$ on the right of $e$, then $\beta (t_1)$ is the left and $\beta (t_2)$ is the right endpoint of $\beta (e)$.
  \end{enumerate} \medskip
\end{definition}
To unpack this definition, we note that $\beta$ maps tiles to vertices, edges to edges, and vertices to tiles, such that $c \subseteq d$ in $V$ iff $\beta (d) \subseteq \beta (c)$ in $D$.
Condition (i) requires that the lines that contain $e \in V$ and $\beta (e) \in D$ intersect at a right angle.
Orient the line of $e$ arbitrarily and let $t_1$ and $t_2$ be the tiles that share $e$ and lie to the left and the right of this line, respectively.
Then Condition (ii) requires that the line of $\beta (e)$, which we orient from $\beta (t_1)$ to $\beta (t_2)$, crosses the line of $e$ from left to right.

\medskip
Note that being an orthogonal dual is a symmetric relation: if $D$ is an orthogonal dual of $V$, then $V$ is an orthogonal dual of $D$.
To introduce a concrete example, call $A \subseteq \Rspace^2$ \emph{locally finite} if every disk contains only finitely many points of $A$, and \emph{coarsely dense} if every half-plane contains infinitely many points of $A$.
For each $a \in A$, write $\domain{}{a}$ for the points $x \in \Rspace^2$ that satisfy $\Edist{x}{a} \leq \Edist{x}{b}$ for all $b \in A$, and note that $\domain{}{a}$ is a closed convex polygon.
The \emph{(order-$1$) Voronoi tessellation} of $A$---named after Georgy Voronoi \cite{Vor070809} and denoted $\Voronoi{}{A} = \Voronoi{1}{A}$---consists of the tiles $\domain{}{a}$, $a \in A$, and their edges and vertices.
The \emph{(order-$1$) Delaunay mosaic} of $A$---named after Boris Delaunay or Delone \cite{Del34} and denoted $\Delaunay{}{A} = \Delaunay{1}{A}$--- is obtained by drawing an edge connecting $a, b \in A$ whenever $\domain{}{a}$ and $\domain{}{b}$ share an edge.
These edges decompose the plane into convex polygons, which are the tiles of $\Delaunay{}{A}$.
It is well known, and also not difficult to prove that $\Delaunay{}{A}$ is an orthogonal dual of $\Voronoi{}{A}$.
If $A$ is \emph{generic}, by which we mean that no four points lie on a common circle, then every vertex of $\Voronoi{}{A}$ has degree $3$ and every tile of $\Delaunay{}{A}$ is a triangle.

\subsection{Weighted Points}
\label{sec:2.2}

We generalize the Voronoi tessellation and Delaunay mosaic to points with real weights.
For a more comprehensive treatment of this subject see \cite{Ede01}.
To begin, let $A \subseteq \Rspace^2$ be a set of unweighted points, locally finite and coarsely dense, as before.
For each $a \in A$, let $\aaaaa \colon \Rspace^2 \to \Rspace$ defined by $\aaaaa (x) = 2 \scalprod{a}{x} - \norm{a}^2$ be the corresponding \emph{affine map}, and $\aaa = (a, \norm{a}^2) \in \Rspace^2 \times \Rspace$ the corresponding \emph{lifted point}.
Let $\Polyhedron{V}{A}$ be the intersection of the half-spaces bounded from below by the graphs of the affine maps, and let $\Polyhedron{D}{A}$ be the convex hull of the lifted points in $\Rspace^3$.
Because $A$ is locally finite, both $\Polyhedron{V}{A}$ and $\Polyhedron{D}{A}$ are convex polyhedra, and because $A$ is coarsely dense, they are both unbounded, with the interior above the boundary complex, which is the graph of a piecewise linear function from $\Rspace^2$ to $\Rspace$.
The following result was at least partially known already to Voronoi \cite{Vor070809}:
\begin{proposition}[Vertical Projection]
  \label{prop:vertical_projection}
  Let $A \subseteq \Rspace^2$ be locally finite and coarsely dense.
  \medskip \begin{enumerate}[1.]
    \item $\Voronoi{}{A}$ is the vertical projection of the boundary complex of $\Polyhedron{V}{A}$ to $\Rspace^2$;
    \item $\Delaunay{}{A}$ is the vertical projection of the boundary complex of $\Polyhedron{D}{A}$ to $\Rspace^2$.
  \end{enumerate} \medskip
\end{proposition}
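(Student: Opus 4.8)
The plan is to base everything on the elementary lifting identity $\Edist{x}{a}^2 = \norm{x}^2 - \aaaaa(x)$, valid for all $x, a \in \Rspace^2$: since $\aaaaa(x) = 2\scalprod{a}{x} - \norm{a}^2$, expanding $\Edist{x}{a}^2 = \norm{x}^2 - 2\scalprod{a}{x} + \norm{a}^2$ gives the claim. Because $\norm{x}^2$ is independent of $a$, for a fixed $x$ the point $a \in A$ nearest to $x$ is exactly the one whose affine map attains the upper envelope $\max_{a \in A} \aaaaa$. The same identity connects both polyhedra, since $\aaaaa$ is precisely the tangent plane to the paraboloid $z = \norm{x}^2$ at the lifted point $\aaa = (a, \norm{a}^2)$.

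For the first claim, the lower boundary of $\Polyhedron{V}{A}$ is the graph of $\max_{a \in A} \aaaaa$, as $\Polyhedron{V}{A}$ is the intersection of the half-spaces $\{(x,z) : z \ge \aaaaa(x)\}$. By the identity, $x \in \domain{}{a}$ iff $\aaaaa(x) \ge \bar b(x)$ for all $b \in A$, so the facet of $\Polyhedron{V}{A}$ carried by the graph of $\aaaaa$ projects vertically onto $\domain{}{a}$. I would then check that the projection respects the whole face structure: an edge shared by the facets of $a$ and $b$ projects to $\domain{}{a} \cap \domain{}{b}$, and a vertex projects to a point equidistant from three or more nearest sites. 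Because the boundary complex is, by hypothesis, the graph of a piecewise linear function, the vertical projection is injective, hence a cell-complex isomorphism onto $\Voronoi{}{A}$.

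For the second claim, I would analyze the lower convex hull of the lifted points, the part of the boundary of $\Polyhedron{D}{A}$ visible from below. A nonvertical hyperplane $z = \ell(x)$ supports $\Polyhedron{D}{A}$ from below exactly when $\ell(a) \le \norm{a}^2$ for all $a \in A$, the supported face being spanned by the lifted points at which equality holds. Writing $\ell(x) = 2\scalprod{p}{x} - (\norm{p}^2 - r^2)$ turns this into $\norm{a - p}^2 \ge r^2$, so lower supporting planes correspond to circles enclosing no point of $A$, and a subset $\sigma \subseteq A$ spans a lower face iff its lifted points lie on a common empty circle. This is the empty-circumcircle characterization, and projecting each face to $\conv{\sigma}$ yields the Delaunay cells, with injectivity of the projection again giving a complex isomorphism. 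To reconcile this with the edge-sharing definition, I would use that $\aaa \mapsto \{z = \aaaaa(x)\}$ is the polarity with respect to the paraboloid, which carries the face lattice of $\Polyhedron{D}{A}$ to that of $\Polyhedron{V}{A}$ while reversing inclusions; together with the first claim and the already-noted orthogonal duality between $\Voronoi{}{A}$ and $\Delaunay{}{A}$, this identifies the projected lower hull with $\Delaunay{}{A}$.

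The main obstacle I anticipate is not the top-dimensional correspondence, which is immediate from the identity, but the bookkeeping of the lower-dimensional faces and the verification that the two descriptions of $\Delaunay{}{A}$---empty circumcircle versus shared Voronoi edges---coincide. This is where the polarity between $\Polyhedron{V}{A}$ and $\Polyhedron{D}{A}$ earns its keep: it simultaneously accounts for the orthogonality of dual edges and the dimension reversal, reducing the second claim to the first rather than forcing an independent face-by-face analysis. Degenerate configurations with more than three cocircular points need no special handling, since the statement concerns arbitrary locally finite, coarsely dense $A$ and not only the generic case.
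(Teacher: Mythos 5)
The paper does not actually prove this proposition: it is stated as a classical fact, attributed to Voronoi, and then used as the springboard for the weighted generalization, so there is no in-paper argument to compare against and your proposal must stand on its own --- which it does. The lifting identity $\Edist{x}{a}^2 = \norm{x}^2 - \aaaaa(x)$, the identification of the boundary of $\Polyhedron{V}{A}$ with the upper envelope of the affine maps, and the computation showing that lower supporting planes of $\Polyhedron{D}{A}$ correspond to circles enclosing no point of $A$ are all correct, and together they form the standard route to this result. You also correctly isolated the one place where genuine care is needed: the paper defines $\Delaunay{}{A}$ combinatorially, via shared Voronoi edges, so the empty-circle description of the projected lower hull must be reconciled with that definition; invoking the paraboloid polarity, which exchanges the face lattices of $\Polyhedron{D}{A}$ and $\Polyhedron{V}{A}$ while reversing inclusions, does exactly this. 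In particular it handles the non-generic case, where more than three cocircular points produce a hull facet whose diagonals are not hull edges, matching the fact that the corresponding Voronoi domains share only a vertex and hence no edge is drawn between them. Two small points for a full write-up: first, the polarity statement is usually quoted for finite point sets, and it persists here precisely because local finiteness and coarse density make $\Polyhedron{V}{A}$ and $\Polyhedron{D}{A}$ genuine polyhedra whose boundary complexes are graphs of piecewise linear functions --- this is established in the paper just before the proposition, so it is a supporting fact you may cite rather than a ``hypothesis''; second, in your parametrization $\ell(x) = 2\scalprod{p}{x} - (\norm{p}^2 - r^2)$ the quantity $r^2$ may a priori be negative, but such planes support no face since equality $\norm{a-p}^2 = r^2$ is then impossible for real $a$, so nothing is lost.
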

It is now easy to generalize the two tilings to points with weights.
Let $w \colon A \to \Rspace$ map each point to its \emph{weight},
and define $\aaaaa_w (x) = 2 \scalprod{a}{x} - \norm{a}^2 + w(a)$
and $\aaa_w = (a, \norm{a}^2 - w(a))$.
Correspondingly, $\wPolyhedron{V}{A}{w}$ is the intersection of the closed half-spaces bounded from below by the graphs of the $\aaaaa_w$, and $\wPolyhedron{D}{A}{w}$ is the convex hull of the $\aaa_w$ in $\Rspace^3$.
We call the vertical projection of the boundary complex of $\wPolyhedron{V}{A}{w}$ to $\Rspace^2$ the \emph{weighted Voronoi tessellation} of $A$ and $w$, denoted $\wVoronoi{}{A}{w}$,
and the vertical projection of the boundary complex of $\wPolyhedron{D}{A}{w}$ to $\Rspace^2$ the \emph{weighted Delaunay mosaic} of $A$ and $w$, denoted $\wDelaunay{}{A}{w}$.
An important difference to the unweighted case is that not every point in $A$ is necessarily associated with a tile in $\wVoronoi{}{A}{w}$.
Correspondingly, not every point in $A$ is also a vertex in $\wDelaunay{}{A}{w}$.

\medskip
These two tilings are known in the literature under a variety of names, including Dirichlet tessellations and power diagrams for $\wVoronoi{}{A}{w}$, and Laguerre triangulations and regular triangulations for $\wDelaunay{}{A}{w}$.
Note that $\wVoronoi{}{A}{w} = \Voronoi{}{A}$ and $\wDelaunay{}{A}{w} = \Delaunay{}{A}$ if $w(a) = 0$ for every $a \in A$.
It is not difficult to see that $\wDelaunay{}{A}{w}$ is an orthogonal dual of $\wVoronoi{}{A}{w}$ for every $w \colon A \to \Rspace$.

\subsection{Tessellations}
\label{sec:2.3}

Let $A \subseteq \Rspace^2$ be locally finite and coarsely dense.
For every finite $B \subseteq A$, we write $\domain{}{B}$ for the points $x \in \Rspace^2$ that satisfy $\Edist{x}{b} \leq \Edist{x}{a}$ for all $b \in B$ and all $a \in A \setminus B$.
Observe that $\domain{}{B}$ is a closed convex polygon, and if we drop the requirement that $A$ be coarsely dense, then the polygon may be unbounded.
Whenever $B \setminus B'$ and $B' \setminus B$ are both non-empty, $\domain{}{B}$ and $\domain{}{B'}$ have disjoint interiors but may overlap along a shared edge or vertex.

\medskip
For every $k \geq 1$, the \emph{order-$k$ Voronoi tessellation}, denoted $\Voronoi{k}{A}$, consists of all polygons $\domain{}{B}$ with cardinality $\card{B} = k$.
Two such tessellations are shown in Figure~\ref{fig:tessellations}, namely $\Voronoi{5}{\Zspace^2}$ in the upper left panel and $\Voronoi{6}{\Zspace^2}$ in the upper right panel.

For contiguous orders, $k-1$ and $k$, the two Voronoi tessellations have pairwise non-crossing edges.
Indeed, $\Voronoi{k-1}{A}$ and $\Voronoi{k}{A}$ share some of their vertices, which is where the paths of edges in the two tessellations cross.
We can therefore \emph{overlay} $\Voronoi{k-1}{A}$ and $\Voronoi{k}{A}$, by which we mean the tessellation formed by drawing all their vertices and edges.
Each tile in the overlay is the intersection of a tile, $\domain{}{B'}$ in $\Voronoi{k-1}{A}$ and another, $\domain{}{B}$ in $\Voronoi{k}{A}$, with $B' \subseteq B$.
Note that for every point in this intersection, the points in $B'$ are the $k-1$ closest, and the unique point in $B \setminus B'$ is the $k$-closest.
The collection of tiles for which a point $a \in A$ is the $k$-th closest is also known as the \emph{$k$-th Brillouin zone} of $a$.
To construct it, we draw every bisector of $a$ and another point in $A$.
This gives a line arrangement, and we select all regions that are separated from $a$ by exactly $k-1$ bisectors.
In summary, the overlay of $\Voronoi{k-1}{A}$ and $\Voronoi{k}{A}$ is the decomposition of the plane into the $k$-th Brillouin zones of the points.
This overlay was introduced in \cite{EdSe86}, where it is referred to as the \emph{degree-$k$ Voronoi tessellation}.
Because of its connection to the $k$-th Brillouin zones, we prefer to call it the \emph{order-$k$ Brillouin tessellation}, denoted $\Brillouin{k}{A}$.
As an example, consider $\Brillouin{6}{\Zspace^2}$ in the upper middle panel of Figure~\ref{fig:tessellations}.

\medskip
Three non-collinear points, $a,b,c$, define a unique circle that passes through them.
We call it the \emph{circumcircle} and denote it $\sigma = \sigma (a,b,c)$.
It \emph{encloses} the points in the open disk bounded by $\sigma$, and we write $\In{\sigma} \subseteq A$ for the points enclosed by $\sigma$, and $\On{\sigma} \subseteq A$ for the points on $\sigma$.
\begin{lemma}[Vertex Characterization]
  \label{lem:vertex_characterization}
  Let $A \subseteq \Rspace^2$ be locally finite, $\sigma$ the circumcircle of three points in $A$, and $k \geq 1$.
  \medskip \begin{enumerate}[1.]
    \item The center of $\sigma$ is a vertex of $\Voronoi{k}{A}$ for $\card{\In{\sigma}} + 1 \leq k \leq \card{\In{\sigma}} + \card{\On{\sigma}} - 1$, and the degree of this vertex in $\Voronoi{k}{A}$ is $\card{\On{\sigma}}$.
    \item The center of $\sigma$ is a vertex of $\Brillouin{k}{A}$ for $\card{\In{\sigma}} + 1 \leq k \leq \card{\In{\sigma}} + \card{\On{\sigma}}$, and the degree of this vertex in $\Brillouin{k}{A}$ is $\card{\On{\sigma}}$ whenever there is equality on the left or the right, and it is $2 \card{\On{\sigma}}$ if both inequalities are strict.
  \end{enumerate} \medskip
  The conditions exhaust the vertices of the order-$k$ Voronoi and Brillouin tessellations of $A$.
\end{lemma}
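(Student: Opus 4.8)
The plan is to analyze both tessellations in a small neighborhood of the center $z$ of $\sigma$, which I place at the origin; write $r$ for the radius of $\sigma$, $m = \card{\In{\sigma}}$, $\ell = \card{\On{\sigma}}$ (so $\ell \geq 3$), and label the points of $\On{\sigma}$ as $p_0, \dots, p_{\ell-1}$ in cyclic angular order. For a unit vector $u$ and small $\varepsilon > 0$, consider the point $x = \varepsilon u$ near $z$. The points of $\In{\sigma}$ stay strictly closest to $x$ and every point of $A$ outside $\sigma$ stays strictly farther than every point of $\On{\sigma}$; hence for $m \leq k \leq m + \ell$ the $B$ with $x \in \domain{}{B}$ consists of $\In{\sigma}$ together with the $j := k - m$ points of $\On{\sigma}$ closest to $x$. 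Since $\Edist{x}{p_i}^2 = r^2 - 2\varepsilon \scalprod{u}{p_i} + \varepsilon^2$, being closest to $x$ among the $p_i$ means having largest projection $\scalprod{u}{p_i}$, that is, being extreme in direction $u$.

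First, for Part 1, I would use that the $j$ points of largest projection form a contiguous arc of $p_0, \dots, p_{\ell-1}$, because the $p_i$ lie in convex position on $\sigma$. As $u$ turns once around $z$, this arc sweeps through all $\ell$ cyclic windows of length $j$ exactly once when $1 \leq j \leq \ell - 1$, whereas for $j = 0$ or $j = \ell$ the window is constant and $z$ lies in the interior of a single tile. This gives the vertex range $m + 1 \leq k \leq m + \ell - 1$. The $\ell$ windows are the $\ell$ tiles meeting at $z$, separated by $\ell$ edges: at each transition direction $u^\ast$ two on-points $p, q$ satisfy $\scalprod{u^\ast}{p} = \scalprod{u^\ast}{q}$, hence (using $\|p\| = \|q\| = r$) $\Edist{x}{p} = \Edist{x}{q}$ along the entire ray $\varepsilon \mapsto \varepsilon u^\ast$, so that ray is a Voronoi edge. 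Therefore the degree equals $\ell = \card{\On{\sigma}}$.

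Next, for Part 2, I would invoke that $\Brillouin{k}{A}$ is the overlay of $\Voronoi{k-1}{A}$ and $\Voronoi{k}{A}$, so its vertices are the union of the vertices of the two factors. Applying Part 1 to orders $k-1$ and $k$, the center is a vertex of at least one factor exactly for $m + 1 \leq k \leq m + \ell$. For the degree I would count the edge rays at $z$ from both factors together: the $\ell$ transitions of the $j$-arc ($j = k - m$) from $\Voronoi{k}{A}$ and the $\ell$ transitions of the $(j-1)$-arc from $\Voronoi{k-1}{A}$. The decisive observation is that these two families use disjoint directions: at any single direction exactly two on-points can tie in projection, never three, since three points of $\On{\sigma}$ are never collinear, and one such tie lies at a single rank boundary, so it moves the $j$-arc or the $(j-1)$-arc but not both. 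Hence for $m + 1 < k < m + \ell$ the two factors contribute $\ell + \ell = 2\ell$ distinct edges, while for $k = m+1$ (respectively $k = m + \ell$) the center is interior to a tile of $\Voronoi{k-1}{A}$ (respectively $\Voronoi{k}{A}$), because the corresponding rank is unambiguous, leaving exactly $\ell$ edges.

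Finally, to show these conditions exhaust all vertices, I would reverse the reasoning: a vertex $z$ of $\Voronoi{k}{A}$ has at least three incident tiles, which forces at least three points of $A$ at a common critical distance from $z$; these points are cocircular about $z$ and any three of them reproduce $\sigma$, after which the rank condition of Part 1 places $k$ in the stated range, and the Brillouin case then follows from the overlay. I expect the main obstacle to be precisely the degree count in Part 2, namely verifying that the edge rays of the two consecutive Voronoi tessellations interleave at the shared center without ever coinciding. This is exactly where the cocircularity of $\On{\sigma}$ (no three of its points collinear) is essential, and it is also what makes the Brillouin degree robust for non-generic $A$, in contrast to the Voronoi tessellations and their orthogonal duals.
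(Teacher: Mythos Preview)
Your argument is correct and follows the same approach as the paper: perturb the center, identify the nearby order-$k$ domains with $\In{\sigma}$ plus a contiguous arc of $k-\card{\In{\sigma}}$ points of $\On{\sigma}$, count the $\card{\On{\sigma}}$ such arcs, and then deduce Part~2 from the overlay description of $\Brillouin{k}{A}$. Your treatment of the degree in Part~2 is in fact more explicit than the paper's---you verify that the edge directions contributed by $\Voronoi{k-1}{A}$ and $\Voronoi{k}{A}$ at a shared vertex never coincide (because a projection tie involves at most two points of $\On{\sigma}$ and hence a single rank boundary), whereas the paper simply invokes the pairwise non-crossing of their edges.
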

\begin{proof}
  For $x\in \Rspace^2$ and $k \geq 1$, there exists at least one set $B \subseteq A$ with $\card{B}=k$ and $x\in\domain{}{B}$. 
  Consider the smallest closed disk centered at $x$ that contains $B$ and call the boundary of the disk $\sigma$.
  The set $B$ may not be unique, but the circle $\sigma$ is.
  Consider a perturbation $y$ of $x$, such that there exists a unique $B'$ with $\card{B'}=k$ and $y\in\domain{}{B'}$.
  Assuming the perturbation is sufficiently small, $B$ consists of the points of $\In{\sigma}$ with additional $k - \card{\In{\sigma}}$ consecutive points from $\On{\sigma}$. 
  For a circle with $n = \card{\On{\sigma}}$ points, there are $n$ different sets of consecutive points of a fixed cardinality $1 \leq m \leq n-1$. 
  Thus, for $1 \leq k - \card{\In{\sigma}} \leq \card{\On{\sigma}}-1$, there are $\card{\On{\sigma}}$ different sets $B'$ and thus $\card{\On{\sigma}}$ Voronoi domains of order $k$ that meet at $x$.
  Hence, $x$ is a vertex of $\Voronoi{k}{A}$ with degree $\card{\On{\sigma}}$ for $\card{\In{\sigma}} + 1 \leq k \leq \card{\In{\sigma}} + \card{\On{\sigma}} - 1$ iff $\card{\On{\sigma}}\geq 3$.

  The second claim follows from the first because $\Brillouin{k}{A}$ is the overlay of $\Voronoi{k-1}{A}$ and $\Voronoi{k}{A}$ with pairwise non-crossing edges.
\end{proof}
For generic sets, the degrees of the vertices in the Voronoi and Brillouin tessellations are $3$ and $6$.
The integer lattice, $\Zspace^2$, is not generic, which explains why some vertices in its tessellations have degree different from $3$ and from $6$; see the top row of Figure~\ref{fig:tessellations}.
For example, $\Voronoi{5}{\Zspace^2}$ and $\Voronoi{6}{\Zspace^2}$ share vertices that have degree $8$ in both tessellations and therefore degree $16$ in $\Brillouin{6}{\Zspace^2}$, which is shown in the middle panel.

\begin{figure}[hbt]
  \centering
    \vspace{-0.0in}
    \resizebox{!}{1.6in}{\input{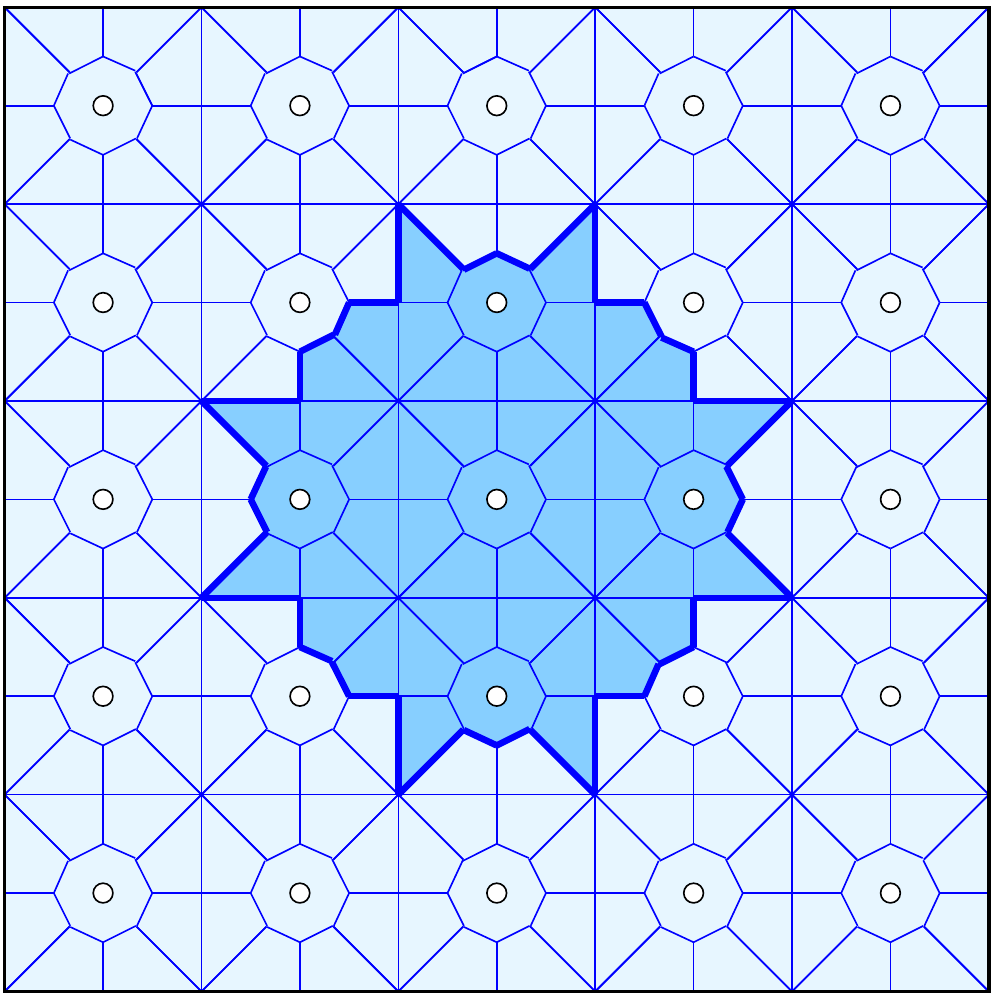_t}}
    \hspace{0.1in}
    \resizebox{!}{1.6in}{\input{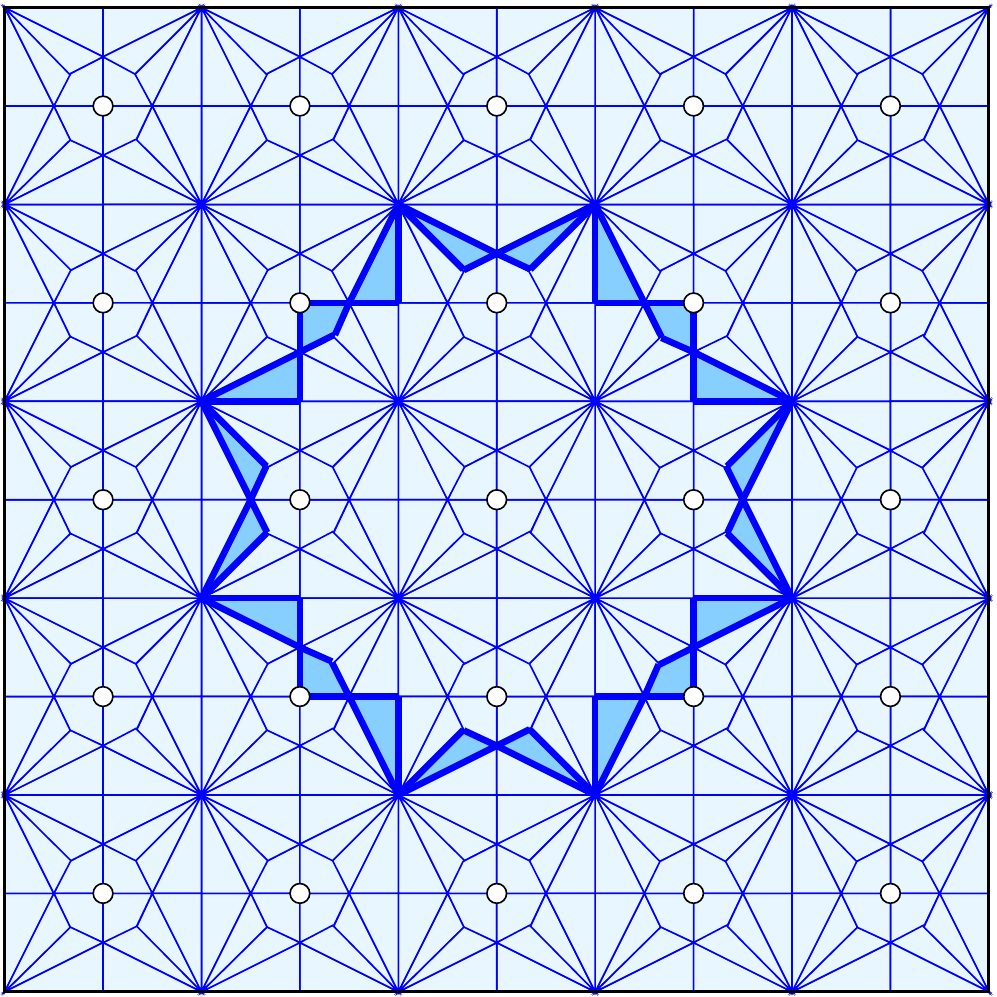_t}}
    \hspace{0.1in}
    \resizebox{!}{1.6in}{\input{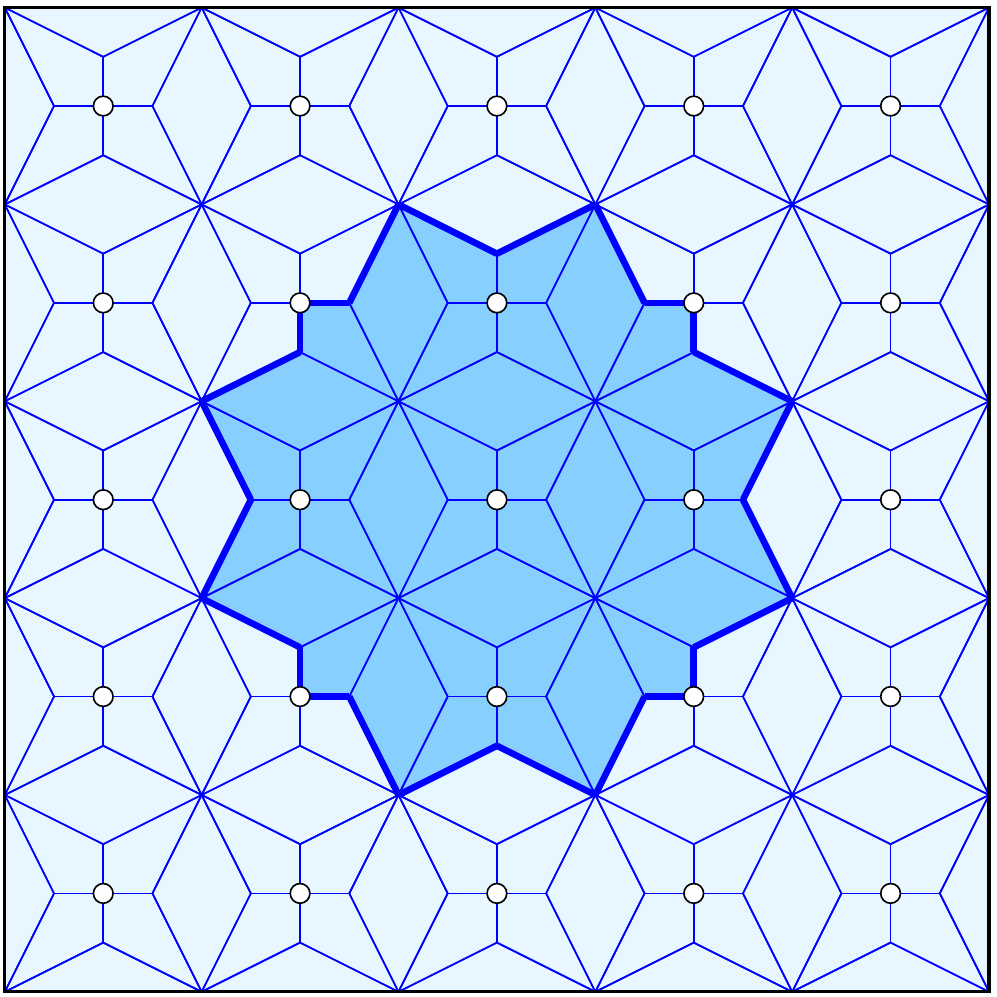_t}} \\
    \vspace{0.15in}
    \resizebox{!}{1.6in}{\input{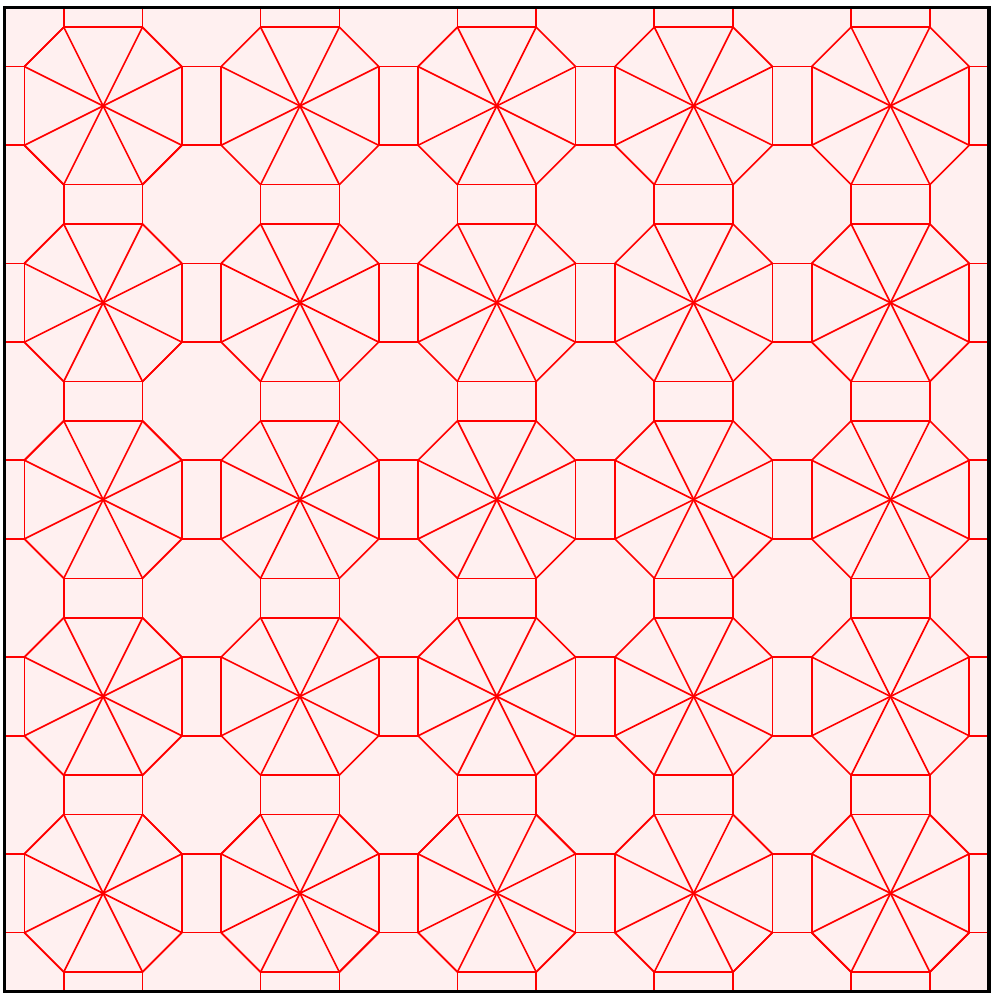_t}}
    \hspace{0.1in}
    \resizebox{!}{1.6in}{\input{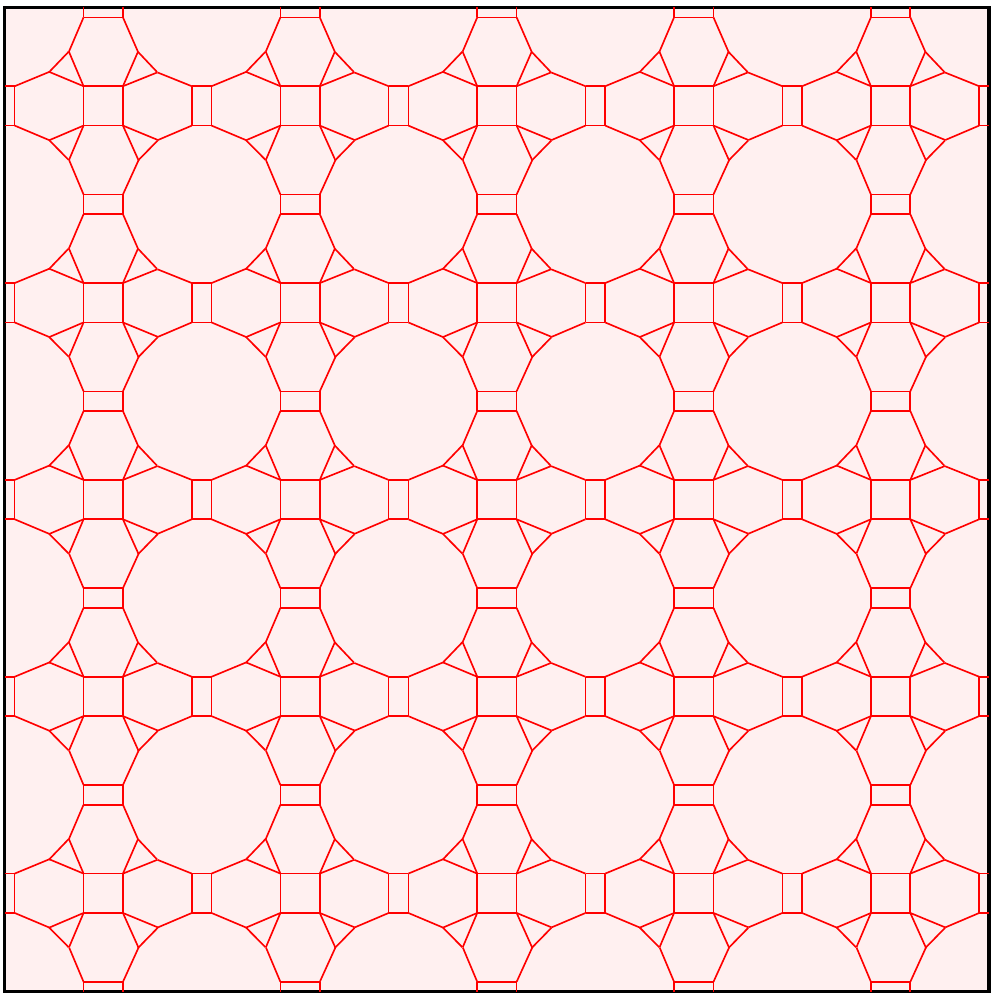_t}}
    \hspace{0.1in}
    \resizebox{!}{1.6in}{\input{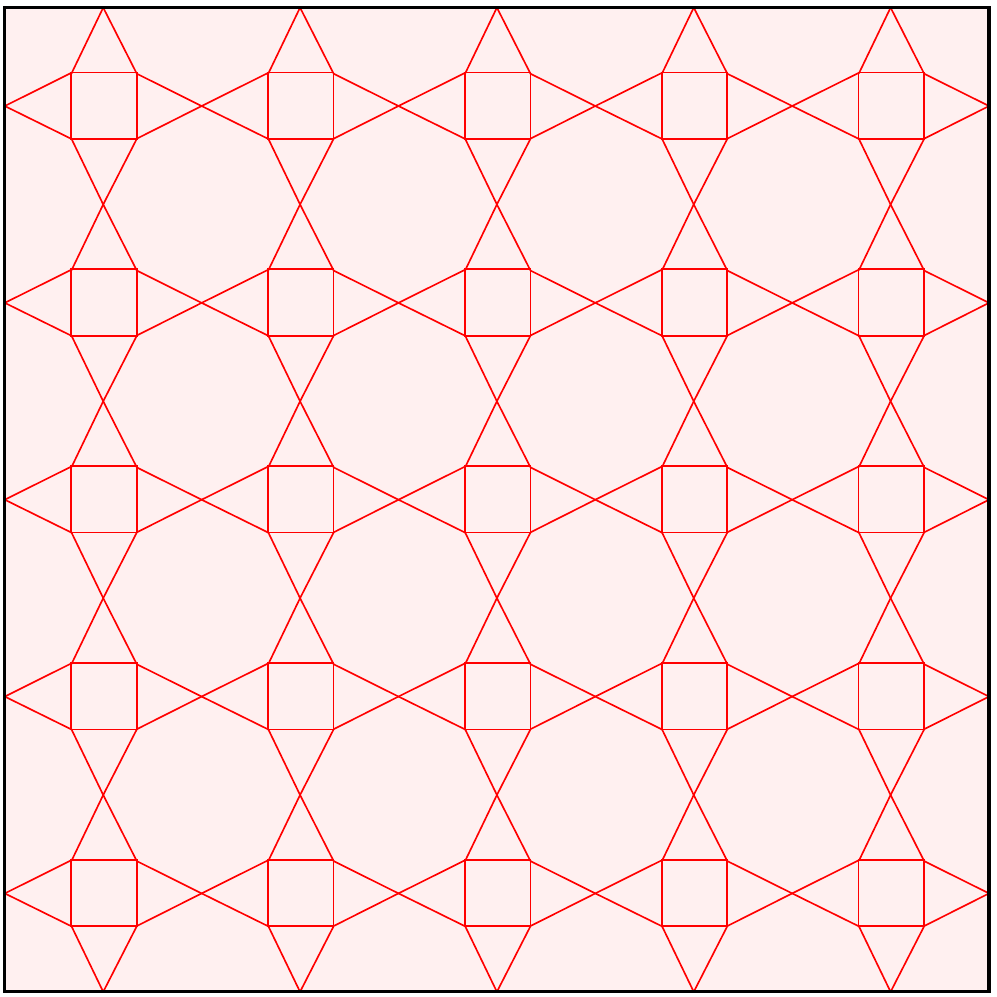_t}} \\
    \vspace{-0.05in}
    \caption{\emph{Top row:} three tessellations of the integer lattice: $\Voronoi{5}{\Zspace^2}$ on the \emph{left}, $\Voronoi{6}{\Zspace^2}$ on the \emph{right}, and their overlay, $\Brillouin{6}{\Zspace^2}$, in the \emph{middle}.
    For every point in the \emph{dark blue} region, the point in the center is among the five closest on the \emph{left}, the sixth closest in the \emph{middle}, and among the six closest on the \emph{right}.
    \emph{Bottom row:} the corresponding mosaics: $\Delaunay{5}{\Zspace^2}$ on the \emph{left}, $\Delaunay{6}{\Zspace^2}$ on the \emph{right}, and $\Iglesias{6}{\Zspace^2}$ in the \emph{middle}.
    Observe that the mosaics are indeed orthogonal duals of the tessellations.}
  \label{fig:tessellations}
\end{figure}

\subsection{Mosaics}
\label{sec:2.4}

There is a relation between (unweighted) order-$k$ Voronoi tessellations and weighted (order-$1$) Voronoi tessellations introduced by Aurenhammer \cite{Aur90}.
Specifically, for every locally finite set $A$ and every finite $k \geq 1$, there is another locally finite set $A_k$ and function $w_k \colon A_k \to \Rspace$ such that $\Voronoi{k}{A} = \wVoronoi{}{A_k}{w_k}$.
Using this relation, we construct orthogonal duals of the order-$k$ Voronoi tessellations.
To describe the relation in detail, write
\begin{align}
  \point{B}  &=  \tfrac{1}{k} \sum\nolimits_{a \in B} a,
  \label{eqn:VD1} \\
  \height{B}  &=  \tfrac{1}{k} \sum\nolimits_{a \in B} \norm{a}^2 ,
  \label{eqn:VD2}
\end{align}
in which $k$ is the cardinality of a finite $B \subseteq A$.
Let $A_k$ be the set of points $\point{B}$, for all $B \subseteq A$ of size $k$, and let $w_k \colon A_k \to \Rspace$ be defined by $w_k (\point{B}) = \norm{\point{B}}^2 - \height{B}$.
We have $\Voronoi{k}{A} = \wVoronoi{}{A_k}{w_k}$, as proved for finite sets in \cite{Aur90}.
For locally finite and coarsely dense sets $A$, the set $A_k$ is coarsely dense but not necessarily locally finite.
It is however easy to prove that there is a locally finite subset of $A_k$ such that all points not in this subset have empty tiles in the Voronoi tessellation and are therefore irrelevant.
This suffices to show that $\wVoronoi{}{A_k}{w_k}$ is well defined.
Furthermore, $\wDelaunay{}{A_k}{w_k}$ is well defined and is an orthogonal dual of $\Voronoi{k}{A}$; see the lower left and right panels in Figure~\ref{fig:tessellations}.
A similar construction can be done for the order-$k$ Brillouin tessellation.
Write
\begin{align}
  \pointt{B}{b} &=  \tfrac{k}{2k-1} \point{B} + \tfrac{k-1}{2k-1} \point{B \setminus \{b\}}  =
  \tfrac{1}{2k-1} b + \tfrac{2}{2k-1} \sum\nolimits_{a \in B \setminus \{b\}} a ,
  \label{eqn:BI1} \\
  \heightt{B}{b} &=  \tfrac{k}{2k-1} \height{B} + \tfrac{k-1}{2k-1} \height{B \setminus \{b\}} =
  \tfrac{1}{2k-1} \norm{b}^2 + \tfrac{2}{2k-1} \sum\nolimits_{a \in B \setminus \{b\}} \norm{a}^2 ,
  \label{eqn:BI2}
\end{align}
in which $k$ is again the cardinality of $B \subseteq A$, which we assume is finite.
Observe that the coefficients are positive and add up to one.
Let $A_{k,1}$ be the set of points $\pointt{B}{b}$, for all subsets $B \subseteq A$ of size $k$ and $b \in B$, and let $w_{k,1} \colon A_{k,1} \to \Rspace$ be defined by $w_{k,1} (\pointt{B}{b}) = \norm{\pointt{B}{b}}^2 - \heightt{B}{b}$.
We have $\Brillouin{k}{A} = \wVoronoi{}{A_{k,1}}{w_{k,1}}$, as proved in \cite{EdIg16}, which implies that $\wDelaunay{}{A_{k,1}}{w_{k,1}}$ is an orthogonal dual of $\Brillouin{k}{A}$; see the lower middle panel in Figure~\ref{fig:tessellations}.
\begin{definition}[Orthogonal Dual Mosaics]
  \label{def:orthogonal_dual_mosaics}
  Noting that they are orthogonal duals of $\Voronoi{k}{A}$ and $\Brillouin{k}{A}$, we call $\Delaunay{k}{A}: = \wDelaunay{}{A_k}{w_k}$ the \emph{order-$k$ Delaunay mosaic} and $\Iglesias{k}{A} := \wDelaunay{}{A_{k,1}}{w_{k,1}}$ the \emph{order-$k$ Iglesias mosaic} of $A$; see \cite{EdIg16}.
\end{definition}
We remark that \cite{EdIg16} describes a $1$-parameter family of coefficients that generate points with real weights whose weighted order-$1$ Voronoi tessellations are the order-$k$ Brillouin tessellation of $A$.
In particular, there are two positive coefficients, $w_1 < w_0$, that satisfy $(k-1)w_0 + w_1 = 1$, and for every $B \subseteq A$ of size $k$ and $b \in B$, we use $w_1$ for $b$ and $w_0$ for every other point in $B$.
The coefficients used in \eqref{eqn:BI1} and \eqref{eqn:BI2} satisfy these conditions, and they are special as they imply centrally symmetric hexagons in the order-$k$ Iglesias mosaic for generic $A$ (see Section~\ref{sec:3.1}), which the other choices do not.
In other words, up to scaling and translation, $\Iglesias{k}{A}$ is the unique member in a $1$-parameter family of orthogonal duals of $\Brillouin{k}{A}$ that guarantees centrally symmetric hexagons in the generic case.

\section{Monotonicity of Angles}
\label{sec:3}

In preparation of the main theorem, we take a detailed look at the angles we find in the mosaics and tessellations.

\subsection{Angle Types}
\label{sec:3.1}

Assuming $A$ is generic, all vertices in $\Voronoi{k}{A}$ have degree $3$, and we distinguish between \emph{old vertices}, which it shares with $\Voronoi{k-1}{A}$, and \emph{new vertices}, which it shares with $\Voronoi{k+1}{A}$.
Similarly, $\Brillouin{k}{A}$ has three age-groups of vertices depending on the Voronoi tessellations it shares the vertex with: \emph{old} for orders $k-2, k-1$, \emph{mid} for orders $k-1,k$, and \emph{new} for orders $k, k+1$.
Since $\Brillouin{k}{A}$ is the overlay of $\Voronoi{k-1}{A}$ and $\Voronoi{k}{A}$, its old and new vertices have degree $3$ and its mid vertices have degree $6$.
Correspondingly, we have \emph{old} and \emph{new tiles} in $\Delaunay{k}{A}$ and \emph{old}, \emph{mid}, \emph{new tiles} in $\Iglesias{k}{A}$.
In the generic case, the old and new tiles are triangles, and the mid tiles are hexagons.

To be specific about the tiles, let $a, b, c \in A$ with $\card{\In{\sigma}} = \ell$, in which $\sigma$ is the circumcircle of the three points.
According to Lemma~\ref{lem:vertex_characterization}, the center of $\sigma$ is new, old in $\Voronoi{\ell+1}{A}, \Voronoi{\ell+2}{A}$, and new, mid, old in $\Brillouin{\ell+1}{A}, \Brillouin{\ell+2}{A}, \Brillouin{\ell+3}{A}$, respectively.
In the dual Delaunay and Iglesias mosaics, $a,b,c$ define triangles and hexagons whose vertices are specified in \eqref{eqn:VD1} and \eqref{eqn:BI1}.
Write $u$ for the sum of points in $\In{\sigma}$, let $x \in \{a,b,c\}$, let $y \in \{a,b,c\} \setminus \{x\}$, and let $z \in \{a,b,c\} \setminus \{x,y\}$.
Then the tiles defined by $a,b,c$ are:
\medskip \begin{itemize}
  \item in $\Delaunay{\ell+1}{A}$:  the new triangle with vertices $\frac{1}{\ell+1} (u+x)$;
  \item in $\Delaunay{\ell+2}{A}$:  the old triangle with vertices
  $\frac{1}{\ell+2} (u+x+y)$;
  \item in $\Iglesias{\ell+1}{A}$:  the new triangle with vertices $\frac{1}{2\ell+1} (2u+x)$;
  \item in $\Iglesias{\ell+2}{A}$:  the mid hexagon with vertices $\frac{1}{2\ell+3} (2u+2x+y)$;
  \item in $\Iglesias{\ell+3}{A}$:  the old triangle with vertices $\frac{1}{2\ell+5} (2u+2x+2y+z)$;
\end{itemize} \medskip
see Figure~\ref{fig:polygons} for the case in which $a,b,c$ are the vertices of an equilateral triangle.
\begin{figure}[hbt]
  \centering
    \vspace{-0.0in}
    \resizebox{!}{1.6in}{\input{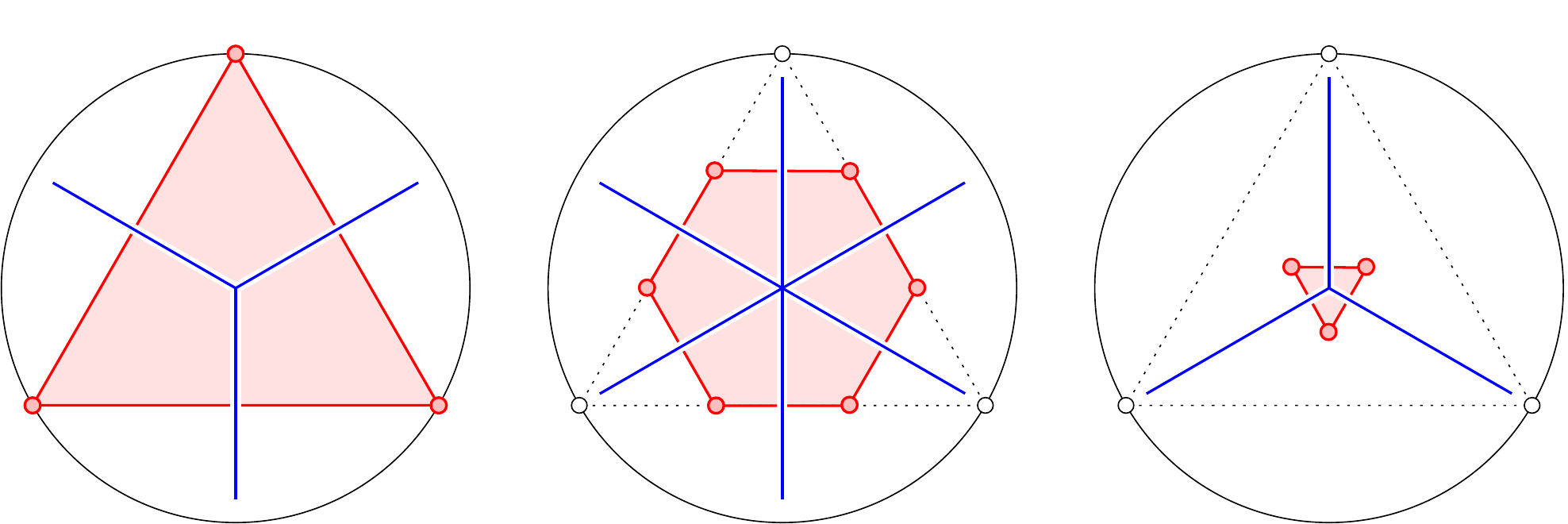_t}} \\
    \vspace{-0.05in}
    \caption{For zero enclosed points, we get the triangle $abc$ in $\Iglesias{1}{A}$, the hexagon constructed by equal tri-sections of the edges of $abc$ in $\Iglesias{2}{A}$, and a translate of $abc$ scaled by $- \frac{1}{5}$
    in $\Iglesias{3}{A}$.}
  \label{fig:polygons}
\end{figure}
Observe that $\frac{1}{2\ell+3} (2u + a+b+c)$ is the center with respect to which the hexagon is centrally symmetric.
Indeed, we can pair up the six vertices so that the average of each pair is the center.
To illustrate the constructions, assume first that $\In{\sigma} = \emptyset$, which implies $u = 0$.
The corresponding triangle in $\Delaunay{1}{A}$ has vertices $a,b,c$, the triangle in $\Delaunay{2}{A}$ has vertices $\frac{1}{2}(a+b)$, $\frac{1}{2}(a+c)$, $\frac{1}{2}(b+c)$,
the triangle in $\Iglesias{1}{A}$ has vertices $a,b,c$, the hexagon in $\Iglesias{2}{A}$ has vertices $\frac{2}{3} a + \frac{1}{3} b$, $\frac{2}{3} a + \frac{1}{3} c$, $\frac{2}{3} b + \frac{1}{3} a$, $\frac{2}{3} b + \frac{1}{3} c$, $\frac{2}{3} c + \frac{1}{3} a$, $\frac{2}{3} c + \frac{1}{3} b$, and the triangle in $\Iglesias{3}{A}$ has vertices $\frac{2}{5}(a+b) + \frac{1}{5}c$, $\frac{2}{5}(a+c) + \frac{1}{5}b$, $\frac{2}{5}(b+c) + \frac{1}{5}a$;
see Figure~\ref{fig:polygons}.
Importantly, the two triangles are similar and thus have the same three angles, and the six angles of the hexagon are supplementary to the angles of the two triangles.
In the more general case, when $\In{\sigma}$ is not necessarily empty, the triangles and hexagons are scaled and translated copies of the shapes we see for $\In{\sigma} = \emptyset$.
Everything we said about angles thus still applies.

\medskip
An \emph{angle} at a vertex inside a convex polygon is a real number between $0$ and $\pi$.
The \emph{supplementary angle} is $\bar{\varphi} = \pi - \varphi$.
We use Lemma~\ref{lem:vertex_characterization} to decide in which tessellations and mosaics an angle or its supplement appear.
\begin{lemma}[Angles and Supplementary Angles]
  \label{lem:angles_and_supplementary_angles}
  Let $A \subseteq \Rspace^2$ be locally finite and generic, and let $\sigma$ be the circumcircle of $a,b,c \in A$ with $\card{\In{\sigma}} = \ell$ and $\varphi = \angle acb$.
  Then
   \medskip \begin{itemize}
    \item $\varphi$ is an angle in $\Delaunay{\ell+1}{A}$, $\Delaunay{\ell+2}{A}$, $\Iglesias{\ell+1}{A}$, $\Iglesias{\ell+3}{A}$, and  $\Brillouin{\ell+2}{A}$,
    \item $\bar{\varphi}$ is an angle in $\Voronoi{\ell+1}{A}$, $\Voronoi{\ell+2}{A}$, $\Brillouin{\ell+1}{A}$, $\Brillouin{\ell+3}{A}$, and $\Iglesias{\ell+2}{A}$.
  \end{itemize} \medskip
  The conditions exhaust the angles appearing in the mosaics and tessellations of $A$.
\end{lemma}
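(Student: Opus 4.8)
The plan is to split the claim into its mosaic part (where the tiles are given above by explicit vertex coordinates) and its tessellation part (whose tiles are orthogonal duals of the mosaic tiles), establishing the angles directly for the mosaics and then transporting them across the duality to the tessellations. Throughout I rely on genericity, so that $\card{\On{\sigma}} = 3$ and the relevant vertices have degree $3$ in the Voronoi tessellations and degree $3$ or $6$ in the Brillouin tessellations, as in Lemma~\ref{lem:vertex_characterization}.

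First I would dispose of the mosaics. For $\Delaunay{\ell+1}{A}$, $\Delaunay{\ell+2}{A}$, $\Iglesias{\ell+1}{A}$, and $\Iglesias{\ell+3}{A}$, the tile defined by $a,b,c$ is, by the formulas \eqref{eqn:VD1} and \eqref{eqn:BI1}, a uniformly scaled and translated copy of the triangle $abc$, with scale factors $\tfrac{1}{\ell+1}$, $\tfrac{1}{\ell+2}$, $\tfrac{1}{2\ell+1}$, and $-\tfrac{1}{2\ell+5}$ respectively; the negative factor in the last case is a point reflection, which preserves unsigned angles. Each such copy is therefore similar to $abc$, so its set of interior angles is exactly $\{\angle bac, \angle cba, \angle acb\}$, which contains $\varphi = \angle acb$. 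For $\Iglesias{\ell+2}{A}$ the tile is the mid hexagon, and I would invoke the observation already recorded above that its six angles are the supplements $\bar\varphi$ of the three triangle angles (the central symmetry pairing each with an equal one). This settles the four $\varphi$-in-mosaic claims and the single $\bar\varphi$-in-mosaic claim.

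Next I would pass to the tessellations through orthogonal duality. By Definition~\ref{def:orthogonal_dual_mosaics}, $\Delaunay{k}{A}$ and $\Iglesias{k}{A}$ are orthogonal duals of $\Voronoi{k}{A}$ and $\Brillouin{k}{A}$, and by Lemma~\ref{lem:vertex_characterization} the center of $\sigma$ is a vertex of $\Voronoi{\ell+1}{A}, \Voronoi{\ell+2}{A}$ and of $\Brillouin{\ell+1}{A}, \Brillouin{\ell+2}{A}, \Brillouin{\ell+3}{A}$, dual respectively to the five mosaic tiles above. The lemma I would prove is that under an orthogonal dual $\beta$ the interior angle of a tile $t$ at a vertex $v$ and the interior angle of the dual tile $\beta(v)$ at the dual vertex $\beta(t)$ are \emph{supplementary}. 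Condition (i) (perpendicularity of $e$ and $\beta(e)$) already forces these two angles to be equal or supplementary as undirected line angles; to pin down the supplement I would use a counting constraint: the cell angles of the tessellation around the vertex sum to $2\pi$, while the interior angles of an $m$-gon sum to $(m-2)\pi$, and for $m=3$ and $m=6$ only the all-supplementary assignment is compatible in the generic case (Condition (ii) gives the same conclusion by orientation bookkeeping). Applying this, the triangles in $\Delaunay{\ell+1}{A}, \Delaunay{\ell+2}{A}, \Iglesias{\ell+1}{A}, \Iglesias{\ell+3}{A}$ carrying angle $\varphi$ dualize to Voronoi and Brillouin vertices with angle $\bar\varphi$, giving all five $\bar\varphi$ claims; the hexagon in $\Iglesias{\ell+2}{A}$ with angle $\bar\varphi$ dualizes to the degree-$6$ mid vertex of $\Brillouin{\ell+2}{A}$ with angle $\pi - \bar\varphi = \varphi$, giving the last $\varphi$ claim.

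Finally, the exhaustion statement would follow by reversing the enumeration: every angle of a tessellation is a cell angle at some vertex, every such vertex is by Lemma~\ref{lem:vertex_characterization} the circumcenter of some triple in $A$ with a determined value of $\card{\In{\sigma}}$, and dually every mosaic tile corresponds to such a circumcenter; hence the triples $a,b,c$ account for every angle, distributed over the orders exactly as listed. The step I expect to cost the most care is the supplementary-angle lemma for orthogonal duals, namely fixing the orientation so that one genuinely obtains the supplement rather than the angle itself and correctly handling the degree-$6$ vertices; once that is in place, the rest is a direct reading of the coordinate formulas and of Lemma~\ref{lem:vertex_characterization}.
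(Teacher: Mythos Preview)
Your proposal is correct and follows essentially the same route as the paper: first read off the mosaic angles from the explicit vertex formulas (scaled/translated copies of $abc$ and the centrally symmetric hexagon), then pass to the tessellations via orthogonal duality, and finally invoke Lemma~\ref{lem:vertex_characterization} for exhaustion. One small slip: the tile in $\Delaunay{\ell+2}{A}$ is also a centrally reflected copy (scale factor $-\tfrac{1}{\ell+2}$, not $+\tfrac{1}{\ell+2}$), just like $\Iglesias{\ell+3}{A}$; this does not affect your angle conclusion. Your extra care about why duality gives the supplement rather than the angle itself is well placed---the paper simply asserts ``orthogonal duality yields supplementary angles''---and your orientation argument via Condition~(ii) is the cleaner way to nail it down, since the counting argument alone does not immediately rule out mixed assignments at a degree-$6$ vertex.
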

\begin{proof}
  The above considerations 
  show that translated and scaled copies of the triangle $abc$ belong to $\Delaunay{\ell+1}{A}$ and $\Iglesias{\ell+1}{A}$, centrally reflected and scaled copies belong to $\Delaunay{\ell+2}{A}$ and $\Iglesias{\ell+3}{A}$, and a hexagon containing two copies of each of the triangle's supplementary angles belongs to $\Iglesias{\ell+2}{A}$. This implies the claims about Delaunay and Iglesias mosaics. Orthogonal duality yields supplementary angles for the Voronoi and Brillouin tessellations and thus the remaining claims.
  
  Due to Lemma~\ref{lem:vertex_characterization}, the above conditions exhaust the angles appearing in the mosaics and tessellations of $A$.
\end{proof}

\subsection{Monotonicity Theorem}
\label{sec:3.2}

We prepare the proof of the main theorem with a technical lemma.
For any three non-collinear points, $a,b,c \in A$, write $\sigma = \sigma (a,b,c)$ for the unique circle that passes through the points, and $\Angle{acb}$ for the angle at $c$ inside the triangle with vertices $a,b,c$.
Recall that ${\In{\sigma}}$ are the points of $A$ that lie in the open disk bounded by $\sigma$.
Assuming a generic set, we define
\begin{align}
  \alpha_\ell(A)  &=  \inf \{ \Angle{acb} \mid a,b,c \in A, \card{\In{\sigma(a,b,c)}} = \ell \};
    \label{eqn:infangle} \\
  \beta_\ell(A)  &=  \inf \{ \pi-\Angle{acb} \mid a,b,c \in A, \card{\In{\sigma(a,b,c)}} = \ell \}.
    \label{eqn:infsuppangle}
\end{align}
We define $\alpha_\ell (A)$ and $\beta_\ell (A)$ also for possibly non-generic sets, but here we count an angle whenever there is an arbitrarily small perturbation such that the angle is counted for the now generic set.
For example, if $\sigma = \sigma(a,b,c)$ passes through $n+1$ points and encloses $p$ points, then $\Angle{abc}$ is counted for $p \leq \ell \leq p+(n-2)$.
Indeed, after fixing the three points that define the angle, $\ell - p$ of the $n-2$ remaining points on the circle may join the $p$ points inside the circle.
\begin{lemma}[Angle Inequalities]
  \label{lem:angle_inequalities}
  Let $A \subseteq \Rspace^2$ be locally finite and coarsely dense.
  Then $\beta_\ell (A) \geq \alpha_\ell (A) \geq \alpha_{\ell+1} (A)$, for all $\ell \geq 0$.
\end{lemma}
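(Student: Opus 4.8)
The plan is to establish the two inequalities separately. The inequality $\beta_\ell(A) \ge \alpha_\ell(A)$ I would get almost for free from the observation that all three angles of a triangle are counted for the same value of $\ell$. Fix $a,b,c \in A$ with $\card{\In{\sigma}} = \ell$, where $\sigma = \sigma(a,b,c)$, and set $\varphi = \Angle{acb}$. Since $\In{\sigma}$ depends only on the circle and not on the labeling of the three points on it, the angle $\Angle{bac}$ at $a$ arises from the triple $(b,a,c)$, whose circumcircle is again $\sigma$; in the non-generic case the admissible range of $\ell$ is identical for all three vertices as well. Hence $\Angle{bac} \ge \alpha_\ell(A)$, and because $\pi - \varphi = \Angle{bac} + \Angle{abc} \ge \Angle{bac}$, we obtain $\pi - \varphi \ge \alpha_\ell(A)$ for every admissible triangle. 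Taking the infimum over all of them yields $\beta_\ell(A) \ge \alpha_\ell(A)$.

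For $\alpha_\ell(A) \ge \alpha_{\ell+1}(A)$ I would argue by a growing-circle sweep. Since every triangle has an angle at most $\tfrac{\pi}{3}$ and all three of its angles are counted, triangles realizing angles near the infimum $\alpha_\ell(A) \le \tfrac{\pi}{3}$ have an acute angle there, so it suffices to prove $\alpha_{\ell+1}(A) \le \varphi$ for every triangle with $\card{\In{\sigma}} = \ell$ whose chosen angle $\varphi = \Angle{acb}$ is acute. Place $a = (-1,0)$ and $b = (1,0)$ with $c$ above the $x$-axis; as $\varphi$ is acute, $c$ lies on the major arc of $\sigma$ and the center of $\sigma$ lies above the $x$-axis, at some $(0,t_0)$ with $t_0 > 0$. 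Now let the center travel up the $y$-axis, increasing $t$ beyond $t_0$. The circle, of radius $R = \sqrt{1+t^2}$, grows, and the acute inscribed angle $\arcsin(1/R)$ it subtends on the upper (major) arc decreases monotonically from $\varphi$ toward $0$.

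The key step is a one-line sign computation: a point $p = (x_p,y_p)$ lies inside the moving disk iff $\norm{p}^2 - 1 \le 2t\,y_p$; hence, as $t$ increases, points with $y_p > 0$ (the $c$-side) enter the disk, each exactly once, points with $y_p < 0$ only leave, and points with $y_p = 0$ stay put. So every increase of the enclosed count $\card{\In{\cdot}}$ is caused by a $c$-side point crossing the circle. Because $A$ is coarsely dense the count tends to infinity, and, being integer-valued with $\pm 1$ steps, it reaches $\ell + 2$; at the \emph{first} parameter where it does so the step is necessarily an increase from $\ell+1$, so a $c$-side point $d$ then lies on the circle with exactly $\ell+1$ points strictly inside. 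Thus the circumcircle of $a,b,d$ encloses $\ell+1$ points, and since $d$ is on the major arc of the enlarged circle, $\Angle{adb}$ is the acute inscribed angle there, strictly smaller than $\varphi$. Therefore $\alpha_{\ell+1}(A) \le \Angle{adb} < \varphi$, and infimizing over triangles gives the inequality.

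The hard part is that the enclosed count is \emph{not} monotone along the sweep, since far-side points leave as the disk grows; one cannot simply watch a single point being added. The device that circumvents this is the first-passage argument to the value $\ell+2$: however the count fluctuates, its first attainment of $\ell+2$ must be an upward step, and upward steps are forced by the sign computation to be $c$-side points, which automatically carry the required smaller angle. The remaining care concerns degeneracies---simultaneous crossings and the non-generic counting convention---which I would absorb into an arbitrarily small perturbation, exactly as in the definitions of $\alpha_\ell$ and $\beta_\ell$ adopted above.
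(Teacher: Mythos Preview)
Your argument is correct and follows essentially the same route as the paper: both prove $\beta_\ell\ge\alpha_\ell$ from the angle-sum identity, and both prove $\alpha_\ell\ge\alpha_{\ell+1}$ by sweeping the pencil of circles through $a,b$ toward the $c$-side, using that only finitely many far-side points can leave while infinitely many $c$-side points enter, and invoking the Inscribed Angle Theorem for the strict decrease of the angle. Your coordinate computation and first-passage-to-$\ell+2$ device are just a more explicit rendering of the paper's short/long-arc increment/decrement language; the one place where the paper is more detailed is the non-generic case, which it handles by an explicit contradiction-plus-continuity argument rather than the perturbation sketch you give.
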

\begin{proof}
  The supplement of the largest angle in a triangle is the sum of the other two angles and therefore necessarily larger than the smallest angle.
  We thus get $\beta_\ell (A) \geq \alpha_\ell (A)$.
  
  \medskip
  We prove the second inequality first in the generic case.
  Let $\sigma_0$ be the circle passing through $a,b,c \in A$, with $\card{\In{\sigma_0}} = \ell$ and $\Angle{acb} \leq \pi /3$.
  Consider the pencil of circles that pass through $a$ and $b$, of which $\sigma_0$ is one member.
  The circles in this pencil that enclose $c$ are necessarily larger than $\sigma_0$.
  Among these, let $\sigma_1, \sigma_2, \ldots$ be the circles that pass through a third point, which we list in the order of increasing radius.
  The points $a, b$ decompose each $\sigma_i$ into two arcs, of which the \emph{short arc} lies inside $\sigma_0$ and the \emph{long arc} lies outside $\sigma_0$.
  Let $c_i \in A$ be the third point on $\sigma_i$, after $a$ and $b$.
  We say $\sigma_i$ \emph{decrements} if $c_i$ lies on the short arc: $\card{\In{\sigma_i}}$ is one less than the count for the circles between $\sigma_{i-1}$ and $\sigma_i$.
  Symmetrically, we say $\sigma_i$ \emph{increments} if $c_i$ lies on the long arc: the count for the circles between $\sigma_{i}$ and $\sigma_{i+1}$ is one greater than $\card{\In{\sigma_i}}$.
  Since $A$ is locally finite, there are only finitely many decrementing circles, and since $A$ is coarsely dense, there are infinitely many incrementing circles.
  This implies that there exists an index $j \geq 1$ such that $\sigma_j$ increments and $\card{\In{\sigma_j}} = \ell+1$.
  By the Inscribed Angle Theorem, $\angle{a c_j b} < \angle{acb}$.
  Since we find a smaller angle for every triplet $a,b,c \in A$ with $\card{\In{\sigma_0}} = \ell$ and $\angle{acb} \leq \pi/3$, we conclude that $\alpha_{\ell+1} \leq \alpha_\ell$.
  
  \medskip
  We prove the remaining non-generic case by contradiction.
  Suppose $\alpha_\ell (A) < \alpha_{\ell+1} (A)$ for some $\ell \geq 0$.
  Then there exist points $a,b,c \in A$ with $\Angle{acb} < \alpha_{\ell+1} (A)$ such that $p+(n-2) = \ell$, in which $p = \card{\In{\sigma}}$, $n+1 = \card{\On{\sigma}}$, and $\sigma = \sigma(a,b,c)$.
  Note that $p + (n-2) > \ell$ is not possible, else a perturbation could have $\ell+1$ points inside the circle, which contradicts that $\angle{acb}$ is strictly smaller than $\alpha_{\ell+1}(A)$.
  There exists an arbitrarily small perturbation $A'$ of $A$ such that $a,b,c \in A'$ and $\sigma$ encloses $\ell$ points, namely the perturbed images of the $p$ points in $\In{\sigma}$ and $\ell-p$ of the $n-2$ points in $\On{\sigma} \setminus \{a,b,c\}$.
  The perturbation can be chosen arbitrarily small so that $A'$ is generic and $\alpha_{\ell+1} (A')$ is arbitrarily close to $\alpha_{\ell+1} (A)$ and therefore strictly larger than $\angle{acb}$.
  To see the latter property, consider the straight-line homotopy from $A$ to $A'$ and observe that $\alpha_{\ell+1}$ depends continuously on the parameter that controls the homotopy.
  Indeed, the Inscribed Angle Theorem guarantees that every angle, $\angle{xzy}$, that contributes to $\alpha_{\ell+1}$ appears at least $\card{\On{\sigma(x,y,z)}}-2$ times.
  After a small perturbation, at least one of these angles still contributes to $\alpha_{\ell+1}$.
\end{proof}

We note that Lemma~\ref{lem:angle_inequalities} does not generalize to finite sets.
To see this, let $A$ be the vertices of an equilateral triangle, $a, b, c$, together with the barycenter, $d = \tfrac{1}{3}(a+b+c)$.
The four points define four circles of which three enclose no point and the circle that passes through $a,b,c$ encloses one point.
Hence, $\pi/6 = \alpha_0 < \alpha_1 = \pi/3$, which contradicts the second inequality in Lemma~\ref{lem:angle_inequalities} for finite sets.
To generalize, we place an additional $k$ points near $d$, which gives $\alpha_{k+1} = \pi/3$ and $\alpha_k$ as close to $\pi/6$ as we like.

\medskip
Write $\alpha (\Delaunay{k}{A})$ and $\omega (\Delaunay{k}{A})$ for the infimum and supremum angles in the order-$k$ Delaunay mosaic, and similarly for the other mosaics and the tessellations in this paper.
We prove that $\alpha$ and $\omega$ behave mostly monotonically, but of course not for finite sets for which even Lemma~\ref{lem:angle_inequalities} does not hold.
\begin{theorem}[Monotonicity of Extreme Angles]
  \label{thm:monotonicity_of_extreme_angles}
  Let $A \subseteq \Rspace^2$ be locally finite, coarsely dense, and generic, and let $k \geq 1$.
  Then
  \medskip \begin{enumerate}[1.]
    \item $\alpha (M_k(A)) \geq \alpha (M_{k+1}(A))$, with $M \in \{\mbox{\rm Del, Igl, Bri}\}$,
    \item $\omega (M_k(A)) \leq \omega (M_{k+1}(A))$, with $M \in \{\mbox{\rm Vor, Bri, Igl}\}$.
  \end{enumerate} \medskip
  Furthermore, $\alpha (\Brillouin{k}{A}) \geq \alpha (\Brillouin{k+1}{A})$ and $\omega (\Iglesias{k}{A}) \leq \omega (\Iglesias{k+1}{A})$ even if we drop the requirement that $A$ be generic.
\end{theorem}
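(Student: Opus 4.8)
The plan is to deduce the Iglesias statement from the Brillouin statement by orthogonal duality, and to prove the Brillouin statement directly through the extreme angle $\alpha_{k-1}(A)$, using Lemma~\ref{lem:angle_inequalities} (which already holds without genericity) to glue the two orders together.

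First I would record that the Iglesias claim is equivalent to the Brillouin claim. Since $\Iglesias{k}{A}$ is an orthogonal dual of $\Brillouin{k}{A}$, the bijection $\beta$ sends each incidence of a tile $t$ with a vertex $v$ of $\Brillouin{k}{A}$ to the incidence of the tile $\beta(v)$ with the vertex $\beta(t)$ of $\Iglesias{k}{A}$, and the two interior angles are supplementary; this is exactly the computation behind Lemma~\ref{lem:angles_and_supplementary_angles}, and it uses only that dual edges are orthogonal, so it remains valid verbatim for non-generic $A$. Consequently the multiset of angles of $\Iglesias{k}{A}$ equals $\{\pi-\psi\}$ as $\psi$ ranges over the angles of $\Brillouin{k}{A}$, whence $\omega(\Iglesias{k}{A}) = \pi - \alpha(\Brillouin{k}{A})$. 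The inequality $\omega(\Iglesias{k}{A}) \le \omega(\Iglesias{k+1}{A})$ is then literally $\alpha(\Brillouin{k}{A}) \ge \alpha(\Brillouin{k+1}{A})$, so it suffices to prove the latter for every locally finite, coarsely dense $A$.

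For the Brillouin inequality I would prove the sandwich $\alpha(\Brillouin{k}{A}) \ge \alpha_{k-1}(A) \ge \alpha(\Brillouin{k+1}{A})$, with $\alpha_\ell$ and $\beta_\ell$ the perturbation-extended quantities from \eqref{eqn:infangle} and \eqref{eqn:infsuppangle}; the outer terms are linked to the middle one by $\beta_\ell(A) \ge \alpha_\ell(A) \ge \alpha_{\ell+1}(A)$ from Lemma~\ref{lem:angle_inequalities}. Both inequalities reduce to an explicit computation of the Brillouin angles at the center $x$ of a circumcircle $\sigma$ through cocircular points $P_0,\dots,P_n$ (cyclically ordered, angular positions $\theta_i$) enclosing $p$ points. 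Because $\Brillouin{k}{A}$ is the overlay of $\Voronoi{k-1}{A}$ and $\Voronoi{k}{A}$ and the edges at $x$ in $\Voronoi{p+m}{A}$ are the perpendicular bisectors of the pairs $P_iP_{i+m}$, pointing in the arc-midpoint directions $\tfrac12(\theta_i+\theta_{i+m})$, the sectors at $x$ are computable half-arcs. The Inscribed Angle Theorem then gives: at a mid-vertex (degree $2(n{+}1)$, orders $p{+}2\le k\le p{+}n$) every Brillouin angle is a half-arc $\tfrac12\,\mathrm{arc}(P_jP_{j+1})=\Angle{P_jP_{j+2}P_{j+1}}$ between consecutive cocircular points, and at a new- or old-vertex (degree $n{+}1$) every angle is $\pi-\Angle{P_{i-1}P_iP_{i+1}}$. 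Each half-arc is counted in $\alpha_\ell(A)$ and each such supplement in $\beta_\ell(A)$ for all $p\le\ell\le p+n-2$; combined with $\beta_\ell\ge\alpha_\ell$ and the monotonicity $\alpha_\ell\ge\alpha_{\ell+1}$, these dominate $\alpha_{k-1}(A)$ in each of the new-, mid-, and old-vertex cases, so every angle of $\Brillouin{k}{A}$ is at least $\alpha_{k-1}(A)$. For the other half of the sandwich I would use that the infimum defining $\alpha_{k-1}(A)$ is realized by inscribed angles over minimal arcs, i.e. by the half-arcs of adjacent pairs $P_jP_{j+1}$ on circles with $p\le k-1\le p+n-2$; each such circle has its center as a mid-vertex of $\Brillouin{k+1}{A}$ (the admissible order ranges coincide) and that mid-vertex displays exactly this half-arc, so $\alpha(\Brillouin{k+1}{A})\le\alpha_{k-1}(A)$.

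The main obstacle is the bookkeeping at degenerate vertices: one must match the admissible enclosure counts $\{p,\dots,p+n-2\}$ of a cocircular configuration with the orders for which $x$ is a vertex of $\Brillouin{k}{A}$ and a mid-vertex of $\Brillouin{k+1}{A}$, and rely on the monotonicity of $\alpha_\ell$ to absorb the top of the range in the old- and mid-vertex cases. A secondary point is the more degenerate situation in which a $\Voronoi{k-1}{A}$ bisector and a $\Voronoi{k}{A}$ bisector coincide at $x$ (parallel chords): this merges two sectors into one, which can only enlarge the affected angle and so leaves the lower bound intact, but it has to be accounted for when enumerating angles. This analysis also explains the asymmetry in the theorem: the Brillouin degenerate angles are precisely the minimal half-arc inscribed angles, which align exactly with the value $\alpha_{k-1}(A)$ attained one order higher, whereas the Delaunay and Iglesias infima (and the Voronoi and Brillouin suprema) involve aggregated polygon angles at degenerate vertices that destroy this alignment, consistent with the failure of their monotonicity for non-generic sets.
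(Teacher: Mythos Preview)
Your argument for the non-generic Brillouin infimum (and, via orthogonal duality, the Iglesias supremum) is correct and is essentially the paper's proof. Both compute the Brillouin angles at the circumcenter of a cocircular configuration, classify the vertex as new, mid, or old, identify mid-vertex angles with half-arcs $\angle P_jP_{j+2}P_{j+1}$ between consecutive cocircular points and end-vertex angles with supplements $\pi-\angle P_{i-1}P_iP_{i+1}$, and push everything through $\alpha_{k-1}(A)$ using Lemma~\ref{lem:angle_inequalities}. The paper organizes the same computation from the viewpoint of a single point $a\in\On{\sigma}$ and its $n$ bisectors, arriving at the chain $\alpha(\Brillouin{k}{A})\ge\min\{\beta_{k-3},\alpha_{k-2},\beta_{k-1}\}\ge\alpha_{k-1}\ge\alpha(\Brillouin{k+1}{A})$; you collapse the middle step but the content is identical. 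Your remark on coinciding bisectors (parallel chords) is a detail the paper's single-point framing sidesteps, and your observation that a merged sector can only be larger handles it correctly.

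However, your proposal proves only the ``Furthermore'' clause and, as a by-product, the $M=\mathrm{Bri}$ case of part~1 and the $M=\mathrm{Igl}$ case of part~2. It does not address the generic Delaunay inequality $\alpha(\Delaunay{k}{A})\ge\alpha(\Delaunay{k+1}{A})$ (equivalently the Voronoi supremum), nor the generic Iglesias infimum $\alpha(\Iglesias{k}{A})\ge\alpha(\Iglesias{k+1}{A})$ (equivalently the Brillouin supremum). These four claims are part of the theorem and do not follow from your sandwich: the Iglesias infimum involves the $\beta_{k-2}$ hexagon angles and the Delaunay case involves neither Brillouin tessellation at all. The paper handles them by reading off from Lemma~\ref{lem:angles_and_supplementary_angles} that, for generic $A$, $\alpha(\Delaunay{k}{A})=\min\{\alpha_{k-2},\alpha_{k-1}\}$ and $\alpha(\Iglesias{k}{A})=\min\{\alpha_{k-3},\beta_{k-2},\alpha_{k-1}\}$, and then applying Lemma~\ref{lem:angle_inequalities} termwise. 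This step is short, but it must be supplied; without it the proof of the theorem is incomplete.
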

\begin{proof}
  We have $\pi - \omega (\Voronoi{k}{A}) = \alpha (\Delaunay{k}{A} )$ since the Voronoi tessellation is an orthogonal dual of the Delaunay mosaic.
  Similarly, $\pi - \omega (\Brillouin{k}{A}) = \alpha (\Iglesias{k}{A} )$ and $\pi - \omega (\Iglesias{k}{A}) = \alpha (\Brillouin{k}{A} )$.
  It thus suffices to prove the three inequalities for the infimum angles.
  In the generic case, we have
  \begin{align}
    \alpha (\Delaunay{k}{A}) = \min \{\alpha_{k-2}, \alpha_{k-1}\}  &\geq  \min \{\alpha_{k-1}, \alpha_k\} = \alpha (\Delaunay{k+1}{A}) , 
      \label{eqn:first} \\
    \alpha (\Iglesias{k}{A}) = \min \{\alpha_{k-3}, \beta_{k-2}, \alpha_{k-1}\}  
    &\geq 
    \min \{\alpha_{k-2}, \beta_{k-1}, \alpha_k\} = \alpha (\Iglesias{k+1}{A}) , 
      \label{eqn:second} \\
    \alpha (\Brillouin{k}{A}) = \min \{\beta_{k-3}, \alpha_{k-2}, \beta_{k-1}\}  &\geq  \min \{\beta_{k-2}, \alpha_{k-1}, \beta_k\} = \alpha (\Brillouin{k+1}{A}),
      \label{eqn:third}
  \end{align}
  in which we use Lemma~\ref{lem:angle_inequalities} to get the inequality in each of the three cases.
  For example, we use $\alpha_{k-3} \geq \alpha_{k-2}$, $\beta_{k-2} \geq \alpha_{k-2}$, and $\alpha_{k-1} \geq \alpha_k$ to establish \eqref{eqn:second}.
  No angle is compared with $\beta_{k-1}$, but this is not necessary.
  
  \medskip
  In the non-generic case, \eqref{eqn:first} and \eqref{eqn:second} fail fatally, while \eqref{eqn:third} can be rescued in a weaker form that is still sufficient to prove the claimed inequalities.
  Specifically,
  \begin{align}
    \alpha (\Brillouin{k}{A}) \geq \min \{\beta_{k-3}, \alpha_{k-2}, \beta_{k-1}\}  &\geq  \alpha_{k-1} \geq \alpha (\Brillouin{k+1}{A}),
      \label{eqn:fourth}
  \end{align}
  as we are about to prove.
  Recall that $\Brillouin{k}{A}$ consists of the $k$-th Brillouin zones of the points in $A$.
  To construct the $k$-th Brillouin zone of $a \in A$, we draw all bisectors defined by $a, b$, with $b \in A \setminus \{a\}$, and we collect all chambers in the resulting line arrangement that are separated from $a$ by exactly $k-1$ bisectors.
  For a vertex $u$ of this zone, write $\sigma$ for the circle with center $u$ and radius $\Edist{u}{a}$.
  For each $b \in \In{\sigma}$, the bisector of $a, b$ separates $a$ from $u$, and for each $b \in \On{\sigma} \setminus \{a\}$, the bisector of $a, b$ passes through $u$.
  Let $p = \card{\In{\sigma}}$ and $n+1 = \card{\On{\sigma}}$, and write $a=a_0, a_1, \ldots, a_n$ for the points in $\On{\sigma}$, listed in a counterclockwise order around $\sigma$.
  The $n$ bisectors that pass through $u$ form $2n$ angles, which we enumerate in a clockwise order as
  \begin{align}
      \pi - \angle{a_naa_1}, \angle{a_1aa_2}, \ldots, \angle{a_{n-1}aa_n},
      \pi - \angle{a_naa_1}, \angle{a_1aa_2}, \ldots, \angle{a_{n-1}aa_n}.
  \end{align}
  These are angles inside the $k$-th Brillouin zone of $a$, for $k = p+1, p+2, \ldots, p+n, p+n+1, p+n, \ldots, p+2$, in this order.
  Consider the $n+1$ cyclic rotations of the ordered list of points in $\On{\sigma}$.
  By the Inscribed Angle Theorem, any two contiguous points form the same angle at every third point.
  We have $n+1$ such angles, and each appears in $\Brillouin{k}{A}$ for $p+2 \leq k \leq p+n$.
  Furthermore, each of these angles contributes to the definition of $\alpha_\ell$ for $p \leq \ell \leq p+n-2$.
  This implies $\alpha_{k-2} \geq \alpha (\Brillouin{k}{A})$ for all $k \geq 2$, which is the third inequality in \eqref{eqn:fourth}.
  In fact, we have equality, unless the supplementary angles defined by three consecutive points along the circle are smaller.
  Here we observe a different pattern:  $\pi - \angle{a_i a_{i+1} a_{i+2}}$ belongs to $\Brillouin{k}{A}$ only for $k= p+1$ and for $k = p+n+1$ (writing indices modulo $n+1$).
  However, it contributes to the definition of $\beta_{\ell}$ for $p \leq \ell \leq p+n-2$.
  
  \medskip
  In summary, every angle that appears in $\Brillouin{k}{A}$ belongs to one of three cases.
  Writing $\sigma$ for the circle that passes through the three points that define the angle, $a_0, a_1, \ldots, a_n$ for the ordered list of points on $\sigma$, and $p = \card{\In{\sigma}}$, as before, the cases are:
  \medskip \begin{itemize}
      \item an angle of the form $\angle{a_i a_{i+1} a_j}$, with $p+2 \leq k \leq p+n$, which contributes to $\alpha_{k-2}$;
      \item an angle of the form $\pi - \angle{a_i a_{i+1} a_{i+2}}$, with $k=p+1$, which contributes to $\beta_{k-1}$;
      \item an angle of the form $\pi - \angle{a_i a_{i+1} a_{i+2}}$, with $k=p+n+1$, which contributes to $\beta_{k-3}$.
  \end{itemize} \medskip
  Therefore $\alpha (\Brillouin{k}{A}) \geq \min \{ \beta_{k-3}, \alpha_{k-2}, \beta_{k-1} \}$, which is the first inequality in \eqref{eqn:fourth}.
  We get $\min \{ \beta_{k-3}, \alpha_{k-2}, \beta_{k-1} \} \geq \alpha_{k-1}$ from Lemma~\ref{lem:angle_inequalities}, which is the middle inequality in \eqref{eqn:fourth}.
  This implies the claimed inequalities in the non-generic case.
\end{proof}

If we drop the genericity requirement for $A$, then the first inequality in Theorem~\ref{thm:monotonicity_of_extreme_angles} fails for the Delaunay and Iglesias mosaics.
See for example the Delaunay mosaics in Figure~\ref{fig:tessellations}, where the minimum angle in $\Delaunay{5}{\Zspace^2}$ is less than in $\Delaunay{6}{\Zspace^2}$.
Equivalently, the second inequality fails for the Voronoi and the Brillouin tessellations; see again Figure~\ref{fig:tessellations}.
Even with genericity assumption, the first inequality does not hold for the Voronoi tessellations and, equivalently, the second inequality does not hold for the Delaunay mosaics; see Appendix~\ref{app:A}.

\subsection{Angle Monotonicity Experimentally}
\label{sec:3.3}

Figure~\ref{fig:lattice} shows the sequences of min and max angles in the order-$k$ Delaunay mosaics and Brillouin tessellations of a lattice and a random periodic set.
The lattice is designed so that any circle that passes through more than three points encloses more points than the values of $k$ considered in our experiment.
The periodic set is constructed as $A = A_0 + \Zspace^2$, in which $A_0$ is a finite set of points chosen uniformly at random in the unit square.
For the purpose of our computational experiments, it suffices to copy $A_0$ into the eight squares surrounding the unit square.
We then collect angles spanned by three points whose circumcenter lies in the middle square, making sure that $k$ is small enough so that the circumcircles are contained in the union of the nine squares.
\begin{figure}[hbt]
  \centering
    \vspace{0.1in}
    \includegraphics[width=0.46\textwidth]{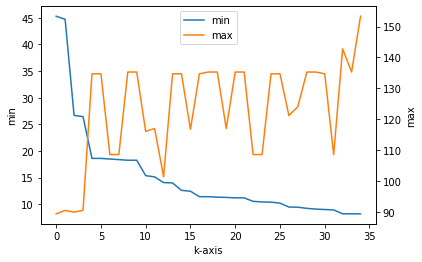}
    \includegraphics[width=0.46\textwidth]{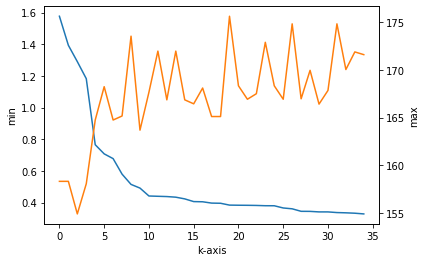}
    \includegraphics[width=0.458\textwidth]{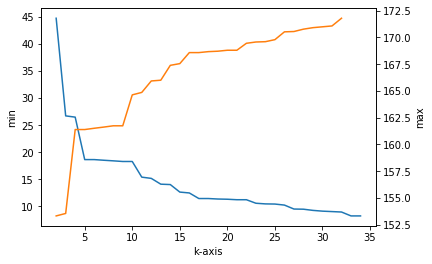}
    \includegraphics[width=0.46\textwidth]{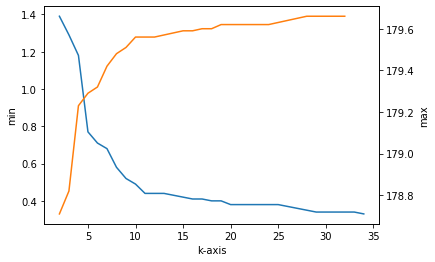}
    \vspace{-0.1in}
    \caption{\emph{Top:} the sequences of min angles (in \emph{blue}) and of max angles (in \emph{orange}) in the order-$k$ Delaunay mosaics of a lattice without four cocircular points on the \emph{left} and of a random periodic set on the \emph{right}.
    We draw the vertical axes for the minima and maxima on opposite sides of each panel.
    Note that the min angles decrease monotonically with increasing $k$, but the max angles are not monotonic.
    \emph{Bottom:} the sequences of min and max angles in the order-$k$ Brillouin tessellations of the lattice on the \emph{left} and the random periodic set on the \emph{right}.
    Both the min and the max angles are monotonic in $k$.}
  \label{fig:lattice}
\end{figure}

We note that a random periodic set is a grossly deficient approximation of a Poisson point process in $\Rspace^2$ if we are interested in infimum and supremum angles.
Indeed, for every $k \geq 1$, the expected infimum angle of a Poisson point process is $0$ just because the probability that three points form an angle smaller than $\ee$ is non-zero for every $\ee > 0$; see also Section~\ref{sec:4}.
Symmetrically, the expected supremum angle is $\pi$.
The sequences of infimum and supremum angles for a Poisson point process are thus very non-interesting.
They are however interesting for \emph{Delone sets},
which are defined by having positive packing radius and bounded covering radius.
These two radii guarantee that the angles are bounded away from $0$ and from $\pi$.

\section{Distribution of Angles}
\label{sec:4}

A stationary Poisson point process in $\Rspace^2$ satisfies the requirements of Theorem~\ref{thm:monotonicity_of_extreme_angles} with probability $1$.
Among other things, this implies that with probability $1$ the infimum angle in the order-$k$ Delaunay mosaic is non-increasing for increasing $k$. 
On the other hand, we will see that the angle distributions for different values of $k$ are the same.
This distribution is positive over the entire open interval of angles, which suggests that the infimum angle vanishes with probability $1$ for all finite orders.

\subsection{Angles in Poisson--Delaunay Mosaics}
\label{sec:4.1}

We recall that a \emph{stationary} \emph{Poisson point process} with \emph{intensity} $\intensity > 0$ is characterized by the expected number of sampled points in a Borel set of given measure, and the independence of these numbers for disjoint Borel sets.
Let $A \subseteq \Rspace^2$ be such a process.
With probability $1$, $A$ is locally finite, coarsely dense, and generic.
Because $A$ is locally finite, its order-$k$ Delaunay mosaics are defined, because $A$ is coarsely dense, they cover the entire $\Rspace^2$, and because $A$ is generic, they are simplicial.
Assuming $A$ is a Poisson point process, we call $\Delaunay{k}{A}$ an \emph{order-$k$ Poisson--Delaunay mosaic}, and similarly for $\Voronoi{k}{A}$, $\Iglesias{k}{A}$, and $\Brillouin{k}{A}$.

\medskip
An old result by Roger Miles \cite{Mil70} asserts that the distribution of the angles in the (order-$1$) Delaunay mosaic of a stationary Poisson point process in $\Rspace^2$ is
\begin{align}
  f(t)  &=  \tfrac{4}{3} [ (\pi-t) \cos t + \sin t ] \sin t .
\end{align}
Accordingly, the distribution of the supplementary angles is $g(t) = f(\pi-t)$, and the distribution of the angles together with their supplementary angles is
\begin{align}
  h(t)  &=  \tfrac{1}{2} [f(t) + g(t)]
         =  \tfrac{2}{3} [(\pi-2t) \cos t + 2 \sin t] \sin t ;
\end{align}
see Figure~\ref{fig:PPP-angles}.
The second derivative of the latter is $h''(t)  =  -\tfrac{8}{3}(\pi-2t)\sin t \cos t$, which is zero at $t = 0, \sfrac{\pi}{2}, \pi$ and negative everywhere else in $[0, \pi]$.
It follows that the distribution of angles together with their supplementary angles is concave.
\begin{figure}[hbt]
  \centering
    \vspace{0.1in}
    \includegraphics[width=0.60\textwidth]{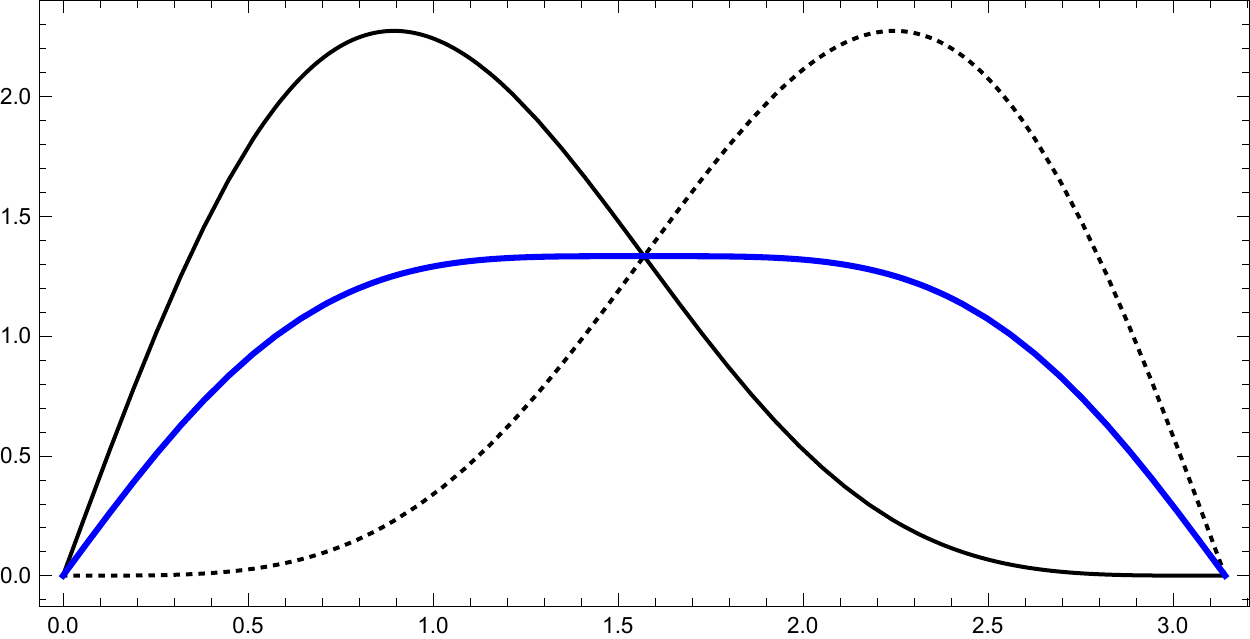} \\
    \vspace{-0.05in}
    \caption{The graphs of $f$, $g$, and $h$ in \emph{solid black}, \emph{dotted black}, and \emph{blue}.}
  \label{fig:PPP-angles}
\end{figure}

\subsection{Angles and Supplements}
\label{sec:4.2}

The main result in this section is that the three distributions displayed in Figure~\ref{fig:PPP-angles} cover all mosaics and tessellations of a stationary Poisson point process considered in this paper.
\begin{theorem}[Angle Distributions]
  \label{thm:angle_distributions}
  Let $A \subseteq \Rspace^2$ be a stationary Poisson point process and $k \geq 1$.
  Then
  \medskip \begin{enumerate}[1.]
    \item $f$ is the distribution of angles in $\Delaunay{k}{A}$,
    \item $g$ is the distribution of angles in $\Voronoi{k}{A}$,
    \item $h$ is the distribution of angles in $\Iglesias{k}{A}$ as well as of the angles in $\Brillouin{k}{A}$.
  \end{enumerate} \medskip
\end{theorem}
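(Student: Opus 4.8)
The plan is to establish the key distributional statements for Delaunay and Voronoi first, and then derive the Iglesias and Brillouin distributions from these by the angle-counting structure already laid out in Lemma~\ref{lem:angles_and_supplementary_angles}. The foundational input is Miles's formula, which gives that $f$ is the distribution of angles in $\Delaunay{1}{A}$. To lift this to all orders $k$, I would invoke the independence of the shape of the simplices from their size and position in a stationary Poisson process, as established in \cite{ENR17}. Concretely, for a stationary Poisson point process the order-$k$ Delaunay mosaic, conditioned on a triangle being present, has the same shape distribution as the order-$1$ mosaic: the map sending three generators $a,b,c$ with $\card{\In{\sigma}}=\ell$ to the triangle $\frac{1}{\ell+1}(u+x)$ in $\Delaunay{\ell+1}{A}$ is, up to an affine scaling and translation depending only on $u$ and $\ell$, a similarity, so it preserves angles. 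Hence claim~1 follows once we argue that the angle $\Angle{acb}$ attached to a cocircular triple has a distribution independent of $\ell=\card{\In{\sigma}}$, which is precisely the content of the Poisson shape-independence result.

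\emph{Claim 2} then follows from \emph{claim 1} by orthogonal duality. Since $\Voronoi{k}{A}$ is an orthogonal dual of $\Delaunay{k}{A}$, each edge of the Voronoi tessellation is perpendicular to the corresponding Delaunay edge, so the angles in the Voronoi tessellation are the supplements of the Delaunay angles. Therefore the distribution of Voronoi angles is $g(t)=f(\pi-t)$, as claimed. This step is essentially formal given the duality relation and the definition of $g$.

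\emph{Claim 3} is where the combinatorics of Lemma~\ref{lem:angles_and_supplementary_angles} does the work. For a fixed circumcircle $\sigma$ through $a,b,c$ with $\ell=\card{\In{\sigma}}$ and angle $\varphi=\Angle{acb}$, that lemma records exactly which mosaic each copy of $\varphi$ or $\bar\varphi$ inhabits. For the Iglesias mosaic, $\varphi$ appears in $\Iglesias{\ell+1}{A}$ (new triangle) and in $\Iglesias{\ell+3}{A}$ (old triangle), while $\bar\varphi$ appears in $\Iglesias{\ell+2}{A}$ (the two supplementary angles of the centrally symmetric hexagon). Summing over all admissible $\ell$, the collection of angles appearing in a fixed order-$k$ Iglesias mosaic is the triple $\{\Angle{acb},\Angle{acb},\pi-\Angle{acb}\}$ coming from three consecutive shifts $\ell=k-1,k-3,k-2$ of the same underlying triple distribution; since that distribution is independent of $\ell$ by the shape-independence already used in claim~1, the angles split into a two-to-one mixture of $f$-distributed angles and a one-to-one contribution of $g$-distributed angles. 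The resulting normalized distribution is $\tfrac{1}{2}[f(t)+g(t)]=h(t)$. The Brillouin case is identical after swapping the roles of $\varphi$ and $\bar\varphi$: Lemma~\ref{lem:angles_and_supplementary_angles} places $\bar\varphi$ in $\Brillouin{\ell+1}{A}$ and $\Brillouin{\ell+3}{A}$ and $\varphi$ in $\Brillouin{\ell+2}{A}$, giving the same mixture $h$ by the symmetry $h(t)=h(\pi-t)$.

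\emph{The hard part} will be making rigorous the claim that the angle attached to a cocircular triple is distributed identically across all orders $\ell$, i.e. genuinely invoking shape-size independence at the level of the distribution rather than merely for a single fixed simplex. One must be careful that the conditional measure on triangle shapes in the order-$k$ Poisson--Delaunay mosaic is the same as in the order-$1$ mosaic; this is where \cite{ENR17} is doing the essential work, and I would want to state precisely which of its results is being used. Once that invariance is granted, everything else is bookkeeping: orthogonal duality converts $f$ to $g$, and the multiplicity counts in Lemma~\ref{lem:angles_and_supplementary_angles} assemble the averaged concave distribution $h$ for both the Iglesias mosaics and the Brillouin tessellations.
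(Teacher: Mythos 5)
Your claims 1 and 2 follow the paper's route exactly: Miles' formula for order $1$, the shape-independence result of \cite{ENR17} to lift $f$ to all orders $k$, and orthogonal duality to get $g$ for the Voronoi tessellations; this part is fine. The gap is in claim 3, and it is genuine. Your bookkeeping there is \emph{per circumcircle}: each circle $\sigma$ with $\ell = \card{\In{\sigma}}$ spawns two similar triangles (in orders $\ell+1$ and $\ell+3$) and one hexagon (in order $\ell+2$), so per circle the triangle angles and hexagon angles occur in ratio $6:6$. But the theorem fixes the order $k$, and in $\Iglesias{k}{A}$ the triangles and the hexagons come from circles of \emph{different} depths, $\ell \in \{k-1,\,k-3\}$ versus $\ell = k-2$. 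Your argument silently assumes these populations of circles occur with the same density per unit area, and shape-independence cannot supply this: \cite{ENR17} controls the conditional distribution of the triangle shape given the depth $\ell$, not the relative frequency of the depths. In fact those frequencies are \emph{not} equal -- the density $v_\ell$ of circumcircles enclosing exactly $\ell$ points grows linearly in $\ell$ -- so the mixture weights do not follow from your counting. (Your wording is also internally inconsistent: a ``two-to-one mixture'' of $f$-angles against a ``one-to-one'' contribution of $g$-angles would normalize to $\tfrac{2}{3}f+\tfrac{1}{3}g$, not to the claimed $\tfrac{1}{2}[f+g]$.)

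What rescues the statement is a quantitative fact your proposal never establishes: one needs $3(v_{k-3}+v_{k-1}) = 6\,v_{k-2}$, i.e.\ $v_{k-3}+v_{k-1}=2\,v_{k-2}$, which holds precisely because $v_\ell$ is an affine-linear function of $\ell$. This is where the paper does its real work: it adapts Lee's inductive counting argument \cite{Lee82} to the Poisson setting to obtain the densities of old and new vertices of $\Voronoi{k}{A}$, observes that in $\Brillouin{k}{A}$ the degree-$3$ vertices (old vertices of $\Voronoi{k-1}{A}$ plus new vertices of $\Voronoi{k}{A}$, each carrying three Voronoi-type angles) and the degree-$6$ vertices (each carrying the three Delaunay-type angles twice) support angles with \emph{equal} density, and only from that equality concludes $h=\tfrac{1}{2}(f+g)$ for $\Brillouin{k}{A}$, passing to $\Iglesias{k}{A}$ by orthogonal duality and the symmetry $h(t)=h(\pi-t)$. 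To repair your proof you must insert this density computation (or an equivalent one, e.g.\ an integral-geometric argument showing $v_\ell=(\ell+1)v_0$); without it, the $1\!:\!1$ mixture, and hence $h$, is unjustified.
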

\begin{proof}
  By Miles' result \cite{Mil70}, $f$ is the distribution of angles in $\Delaunay{1}{A}$, and because the order-$1$ Voronoi tessellation is an orthogonal dual of the order-$1$ Delaunay mosaic, $g$ is the distribution of angles in $\Voronoi{1}{A}$.
  As proved in \cite{ENR17}, the shape of a triangle spanned by three points in a stationary Poisson point process is independent of the number of points its circumcircle encloses.
  This implies that $f$ and $g$ are the distributions of the angles in $\Delaunay{k}{A}$ and $\Voronoi{k}{A}$ for all positive $k$.
  
  \medskip
  Assuming the intensity of the Poisson point process is $\rho > 0$, the densities of old and new vertices in $\Voronoi{k}{A}$ are $(2k-1) \rho$ and $2k \rho$, respectively (see the inductive counting argument for finite sets in \cite{Lee82}, which can be adapted to the Poisson point process case).
  The degree-$3$ vertices in $\Brillouin{k}{A}$ are the old vertices in $\Voronoi{k-1}{A}$ plus the new vertices in $\Voronoi{k}{A}$, so their density is $(4k-2) \rho$.
  The degree-$6$ vertices in $\Brillouin{k}{A}$ are the new vertices in $\Voronoi{k-1}{A}$, which are also the old vertices in $\Voronoi{k}{A}$, so their density is $(2k-1) \rho$.
  It follows that the density of angles around degree-$3$ vertices is the same as around degree-$6$ vertices, namely $(12k-6) \rho$.
  The distribution of these angles is therefore the average of the distributions in the Delaunay mosaics and Voronoi tessellations; that is: $h = \frac{1}{2} (f+g)$.
  
  Finally, $\Iglesias{k}{A}$ is an orthogonal dual of $\Brillouin{k}{A}$, so its angles are supplementary to the angles in $\Brillouin{k}{A}$.
  Since $h$ is symmetric, $h(t) = h(\pi-t)$, this implies that $h$ is also the distribution of angles in $\Iglesias{k}{A}$.
\end{proof}

\subsection{Angle Distributions Experimentally}
\label{sec:4.3}

To get some perspective on Theorem~\ref{thm:monotonicity_of_extreme_angles}, we collect the angles in the Brillouin zones of $0$ in $\Zspace^2$ and in a perturbation of $\Zspace^2$.
Recall that the order-$k$ Brillouin tessellation of $\Zspace^2$ consists of a copy of the $k$-th Brillouin zone of $0$ for each point in the lattice, so we get the same angles either way.
In Figure~\ref{fig:angledistribution-IL}, we see the empirical distribution of the angles in the first $57$ Brillouin zones of $0 \in \Zspace^2$ in the left panel.
While the distribution for a generic point set is necessarily symmetric, we see a significant bias toward small angles.
The reason for the asymmetry are the many cocircular points, which give rise to high-degree vertices in the bisector arrangement.
If $\ell \geq 3$ lines pass through a common vertex, we get only the $2 \ell$ angles between lines that are contiguous in the ordering by slope.
The other $2 \binom{\ell}{2} - 2 \ell$ angles, which are necessarily larger, are suppressed by the degeneracy.
Compare this with the symmetric empirical distribution of angles on the right in Figure~\ref{fig:angledistribution-IL}, which collects the first $57$ Brillouin zones of $0$ in a perturbation of $\Zspace^2$.
\begin{figure}[hbt]
  \centering
    \vspace{0.1in}
    \includegraphics[width=0.49\textwidth,height=0.2\textwidth]{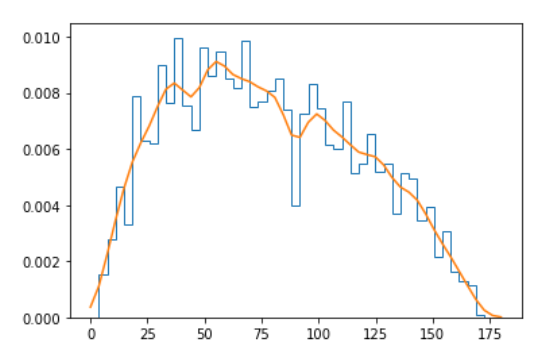} \hspace{0.00in}
    \includegraphics[width=0.47\textwidth,height=0.2\textwidth]{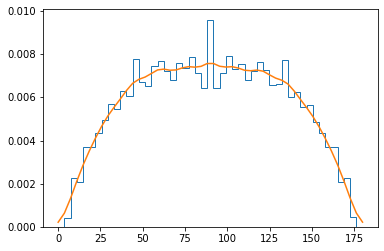}
    \vspace{-0.1in}
    \caption{The distribution of the angles in the first $57$ Brillouin zones of $0$ in $\Zspace^2$ on the \emph{left}, and in a perturbation of $\Zspace^2$ on the \emph{right}.
    The \emph{orange} curves smoothe out the histogram skylines.}
  \label{fig:angledistribution-IL}
\end{figure}

Figure~\ref{fig:PPP} shows empirical angle distributions for a stationary Poisson point process in $\Rspace^2$, which we approximate by sampling $400$ points uniformly at random in the unit square and copying this square around to avoid boundary effects.
According to Theorem~\ref{thm:angle_distributions}, we get the same distributions for each order, and these distributions are $f$ for the Delaunay mosaics, $g$ for the Voronoi tessellations, and $h$ for the Brillouin tessellations.
Indeed, the distributions for the four orders displayed in Figure~\ref{fig:PPP} have an unmistaken similarity to the distributions in Figure~\ref{fig:PPP-angles}.
\begin{figure}[hbt]
  \centering
    \vspace{0.1in}
    \includegraphics[width=0.48\textwidth,height=0.22\textwidth]{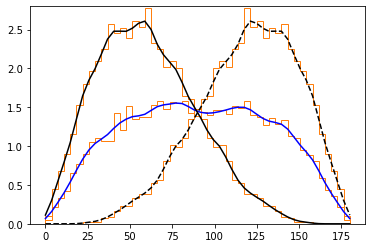} \hspace{0.1in}
    \includegraphics[width=0.48\textwidth,height=0.22\textwidth]{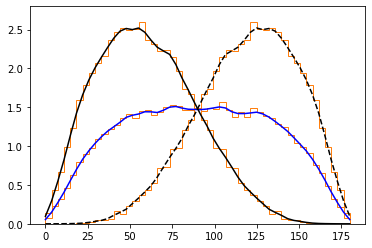} \\ \vspace{0.1in}
    \includegraphics[width=0.48\textwidth,height=0.22\textwidth]{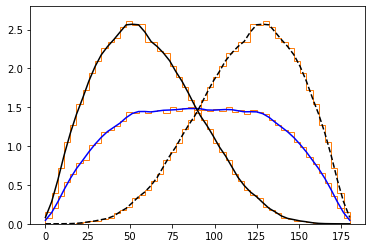} \hspace{0.1in}
    \includegraphics[width=0.48\textwidth,height=0.22\textwidth]{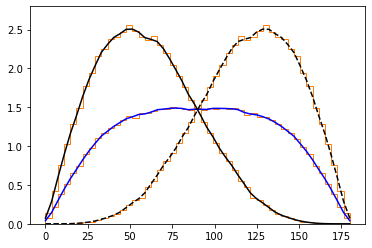}
    \vspace{-0.2in}
    \caption{The empirical angle distributions for the Delaunay mosaics, Voronoi tessellations, and Brillouin tessellations of a Poisson point process in $\Rspace^2$.
    \emph{Top:} orders $k = 2$ and $k = 6$.
    \emph{Bottom:} orders $k = 15$ and $k = 30$. }
  \label{fig:PPP}
\end{figure}

\section{Discussion}
\label{sec:5}

The main contribution of this paper is a theorem about the infimum angles in order-$k$ Brillouin tessellations and the supremum angles in their dual order-$k$ Iglesias mosaics in $\Rspace^2$.
In particular, these angles are monotonic in $k$ provided the point set that generates the tessellations and mosaics is locally finite and coarsely dense.
Without local finiteness, the concepts are not defined, and without coarse density, the angles fail to be monotonic.
Example sets that satisfy the requirements are lattices, periodic sets, Delone sets, and Poisson point processes, and they are used to illustrate the result with computational experiments.

If in addition to local finiteness and coarse density, we require that the point sets be generic, the infimum angles are also monotonic for the order-$k$ Delaunay and Iglesias mosaics, and the supremum angles are also monotonic for the order-$k$ Voronoi and Brillouin tessellations.
Without genericity, these angles fail to be monotonic.
We close this paper with two questions related to this work:
\medskip \begin{itemize}
  \item Is there a maxmin theorem for angles in order-$k$ Delaunay mosaics that extends Sibson's theorem \cite{Sib78} beyond $k = 1$?
  \item Is there an extension of Lawson's flip algorithm \cite{Law77} to Delaunay mosaics beyond order $1$?
\end{itemize}

\subsubsection*{Acknowledgements}

\small{The first author thanks Robert Adler and Anton Nikitenko for discussions on Poisson point processes, and for pointing out the formula for the angle distribution in \cite{Mil70}.}


\clearpage
\appendix

\clearpage
\section{Counterexample}
\label{app:A}

In this appendix, we prove that the omission of the Voronoi tessellations in the first family of inequalities in Theorem~\ref{thm:monotonicity_of_extreme_angles} is not accidental.
Equivalently, the omission of the Delaunay mosaics in the second family of inequalities in the same theorem is not accidental.
\begin{lemma}[Counterexample to Monotonicity]
  \label{lem:counterexample_to_monotonicity}
  For every sufficiently large $k$, there exist locally finite, coarsely dense, and generic sets $A \subseteq \Rspace^2$ such that $\alpha (\Voronoi{k}{A}) < \alpha (\Voronoi{k+1}{A})$ as well as $\omega (\Delaunay{k}{A}) > \omega (\Delaunay{k+1}{A})$.
\end{lemma}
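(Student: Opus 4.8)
The plan is to translate the statement into the language of the infima $\beta_\ell$ from \eqref{eqn:infsuppangle} and then exhibit a set whose sequence $\beta_\ell$ has a downward spike. Since $\Voronoi{k}{A}$ is an orthogonal dual of $\Delaunay{k}{A}$ and orthogonal duality is symmetric (see Definition~\ref{def:orthogonal_dual}), we have $\omega(\Delaunay{k}{A}) = \pi - \alpha(\Voronoi{k}{A})$, so the two inequalities in the statement are equivalent and it suffices to prove the one for Voronoi tessellations. By Lemma~\ref{lem:angles_and_supplementary_angles}, the angles of $\Voronoi{k}{A}$ are exactly the supplements $\pi - \Angle{acb}$ of angles whose circumcircle encloses $\ell = k-1$ or $\ell = k-2$ points, so
\begin{align}
  \alpha(\Voronoi{k}{A}) = \min\{\beta_{k-2}(A), \beta_{k-1}(A)\}.
\end{align}
It therefore suffices to build, for each sufficiently large $k$, a locally finite, coarsely dense, generic set with $\beta_{k-2}(A) < \min\{\beta_{k-1}(A), \beta_k(A)\}$, because then $\alpha(\Voronoi{k}{A}) = \beta_{k-2}(A) < \min\{\beta_{k-1}(A), \beta_k(A)\} = \alpha(\Voronoi{k+1}{A})$.

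To force $\beta_{k-2}$ below its neighbours I would plant a single, exceptionally obtuse triangle that encloses exactly $k-2$ points. Because an angle close to $\pi$ forces a large circumradius --- from $\Edist{a}{b} = 2R\sin\Angle{acb}$ --- the three points $a,b,c$ must be placed as a tiny, nearly collinear cluster whose circumcircle $\sigma$ has some fixed radius $R$; I position $k-2$ further points inside $\sigma$ so that $\card{\In{\sigma}} = k-2$, which yields $\beta_{k-2}(A) \le \pi - \Angle{acb}$, a value I can drive arbitrarily close to $0$. I then surround $\sigma$ by a wide empty annulus and only afterwards resume a coarse generic background, for definiteness a widely spaced generic Delone set, which supplies coarse density and local finiteness while keeping its own obtuse triangles under control.

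The core of the argument is to bound $\beta_{k-1}(A)$ and $\beta_k(A)$ from below by a quantity exceeding the tiny value of $\beta_{k-2}(A)$; that is, to show that no triangle whose circumcircle encloses $k-1$ or $k$ points is as obtuse as the planted one. I would split the triples into three types. For triples lying entirely in the Delone background, the lower bound on the spacing together with the bounded covering radius caps the circumradius of any circle enclosing at most $k$ points, and $\Edist{a}{b} = 2R\sin\Angle{acb}$ then bounds every supplement from below by a positive constant depending only on the background. For triples that reach from the cluster across the empty annulus, the pencil-of-circles reasoning from the proof of Lemma~\ref{lem:angle_inequalities} applies: making such a triangle obtuse forces it to be nearly collinear, hence to have a very large circumcircle, hence to enclose far more than $k$ points, so at enclosure counts $k-1$ and $k$ the supplement stays bounded away from $0$ by an amount governed by the annulus width. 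The finitely many triples among $a,b,c$ and the $k-2$ inner points are checked directly. Choosing the planted angle last, after the background spacing and the annulus width are fixed, makes $\pi - \Angle{acb}$ smaller than all of these lower bounds.

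The main obstacle is precisely this global control at the two neighbouring enclosure counts: the planted defect must lower $\beta$ at the single index $k-2$ without also creating a competing near-$\pi$ angle at indices $k-1$ or $k$, and verifying this for every mixed triple --- where two cluster points are close but the third is distant --- is where the quantitative trade-off between obtuseness and enclosure count (via the Inscribed Angle Theorem) must be made airtight. Once the strict inequality $\beta_{k-2}(A) < \min\{\beta_{k-1}(A), \beta_k(A)\}$ holds for this possibly non-generic $A$, I finish with an arbitrarily small generic perturbation: as in the non-generic case of Lemma~\ref{lem:angle_inequalities}, each $\beta_\ell$ varies continuously with the point positions and strict inequalities are open, so the perturbed set is generic, coarsely dense, and locally finite and still satisfies the required inequality.
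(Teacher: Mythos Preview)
Your overall strategy matches the paper's: plant one very obtuse triangle at enclosure level $k-2$ and show that no triangle at levels $k-1$ or $k$ is comparably obtuse. The reduction via Lemma~\ref{lem:angles_and_supplementary_angles} to $\alpha(\Voronoi{k}{A}) = \min\{\beta_{k-2},\beta_{k-1}\}$ is correct, as is the duality step.

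The constructions differ, and this is where your proposal leaves a gap that the paper's choice avoids. You build a layered set (tiny nearly collinear triple on a circle $\sigma$, $k-2$ inner points, wide empty annulus, Delone background) and you correctly identify---but do not carry out---the hard step: bounding the obtuseness of mixed triples at levels $k-1$ and $k$. The paper sidesteps this entirely by taking a generic perturbation $P$ of $\Zspace^2$ with perturbation radius $\tau<\tfrac14$ and then adding only \emph{two} points $a',a''$ within distance $\ee$ of some $a\in P$, chosen so that the circle through $a',a,a''$ encloses exactly $k-2$ points. The uniform density of $P$ gives, by an area count, the two-sided bound $\sqrt{k/\pi}-\sqrt{2}/2-\tau \le R \le \sqrt{k/\pi}+\sqrt{2}/2+\tau$ for any circle enclosing $k$ points; the uniform minimum spacing $\ge 1-2\tau$ in $P$ bounds every arc between points of $P$ from below, which caps each angle by $\pi-(1-2\tau)/R$. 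With only two extra points there are no separately placed inner points and only three ``short'' pairs $\{a,a'\},\{a,a''\},\{a',a''\}$; any non-planted triangle at level $k-1$ or $k$ therefore still has at least one arc of length $\ge 1-2\tau-\ee$ adjacent to each vertex, yielding $\omega(\Delaunay{k+1}{A}) < \pi-\tfrac{(1-2\tau-\ee)/2}{\sqrt{k/\pi}+\sqrt{2}/2+\tau}$, while the planted angle $\angle{a'aa''}$ can be pushed arbitrarily close to $\pi$ by shrinking $\ee$. Thus the mixed-triple obstacle you flag simply does not arise: the lattice itself supplies both the $k-2$ enclosed points and the uniform control on all other triangles, with no annulus and no case split.
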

\begin{proof}
  The two claimed inequalities are equivalent, so it suffices to prove the second.
  We do this by constructing a set that satisfies all claimed properties.
  Let $P \subseteq \Rspace^2$ be a perturbation of the integer lattice for which there exists a bijection $\gamma \colon \Zspace^2 \to P$ with $\Edist{a}{\gamma(a)} \leq \tau$, for some $0 < \tau < 1/4$.
  This perturbation is clearly locally finite and coarsely dense.
  It can also be chosen to be generic.
  
  \medskip
  Using a straightforward area argument, we see that a disk with radius $R$ contains at least $(R - \sfrac{\sqrt{2}}{2})^2 \pi$ and at most $(R + \sfrac{\sqrt{2}}{2})^2 \pi$ integer points.
  Similarly, it contains at least $(R - \sfrac{\sqrt{2}}{2} - \tau)^2 \pi$ and at most $(R + \sfrac{\sqrt{2}}{2} + \tau)^2 \pi$ points of $P$.
  It follows that the radius of a circle enclosing $k$ points of $P$ satisfies
  $
    \sqrt{{k}/{\pi}} - \sfrac{\sqrt{2}}{2} - \tau  \leq  R  \leq  \sqrt{{k}/{\pi}} + \sfrac{\sqrt{2}}{2} + \tau .
  $
  Letting $a, b, c \in P$ lie on such a circle,
  we have $\angle{acb} < \pi - {(1-2\tau)}/{R}$ because the arcs connecting $a$ to $c$ and $c$ to $b$ each have length greater than $1- 2 \tau$.
  The circumcircles of the triangles in the order-$(k+1)$ Delaunay mosaic enclose $k$ or $k-1$ point.
  We thus get an upper bound on the supremum angle from an upper bound on the possible radii:
  \begin{align}
    \omega (\Delaunay{k+1}{P})  &<  \pi - \frac{1-2\tau}{\sqrt{\frac{k}{\pi}} + \frac{\sqrt{2}}{2} + \tau} .
    \label{eqn:counter1}
  \end{align}
  Let $A$ be the set $P$ with two extra points: $a'$ and $a''$ at distance at most $\ee > 0$ from some point $a \in P$.
  Choose $a'$ and $a''$ generically so that the circle passing through $a', a, a''$ encloses $k-2$ points of $A$, and such that $a$ lies between $a'$ and $a''$ on the shorter of the two arcs connecting $a'$ to $a''$.
  For sufficiently small $\ee$, the lengths of the arcs from $a'$ to $a$ and from $a$ to $a''$ are less than $2 \ee$ each.
  Write $R'$ for the radius of this circle, and observe that in this case $\angle{a' a a''} > \pi - 2 \ee / R'$. 
  We get a lower bound on the supremum angle using a lower bound on the possible radii:
  \begin{align}
    \omega (\Delaunay{k}{A})  &>  \pi - \frac{2 \ee}{\sqrt{\frac{k-2}{\pi}} - \frac{\sqrt{2}}{2} - \tau} .
    \label{eqn:counter2}
  \end{align}
  For $k \geq 6$ the denominator is positive, which is the range of orders for which our construction works. 
  Most of the circles that enclose $k$ or $k-1$ points are the same for $A$ and for $P$.
  Since $P \subseteq A$, the upper bound on the possible radii used in \eqref{eqn:counter1} is still valid.
  Nevertheless, the upper bound on the supremum angle is somewhat smaller:
  \begin{align}
    \omega (\Delaunay{k+1}{A})  &<  \pi -  \frac{(1-2\tau-\ee)/2}{\sqrt{\frac{k}{\pi}} + \frac{\sqrt{2}}{2} + \tau} .
    \label{eqn:counter3}
  \end{align}
  because we may have circles that pass through two of $a', a, a''$ and some other third point.
  In this case, one arc can be very short while the other arc has length at least $1-2\tau-\ee$, which implies the bound in \eqref{eqn:counter3}.
  Choosing $\ee$ sufficiently small, we can make the lower bound in \eqref{eqn:counter2} arbitrarily close to $\pi$ while bounding the upper bound in \eqref{eqn:counter3} away from $\pi$.
  The claimed inequality follows.
\end{proof}

\end{document}